\documentclass[12pt,DIV=14]{scrartcl}
\usepackage{amsmath,amsthm,amssymb}
\usepackage{enumerate}
\usepackage%
[colorlinks=true,citecolor=red,urlcolor=purple,linkcolor=blue,pdfborder={0 0 0}]%
{hyperref}
\usepackage[pdftex]{graphicx}
\pdfcompresslevel=9
\DeclareGraphicsExtensions{.jpg,.pdf,.mps,.png}
\graphicspath{{images/}}
\allowdisplaybreaks
\newtheorem{theorem}{Theorem}[section]
\newtheorem{corollary}[theorem]{Corollary}
\newtheorem{lemma}[theorem]{Lemma}
\theoremstyle{definition}

\theoremstyle{remark}
\newtheorem{remark}{Remark}

\providecommand{\xx}{\mathbf x }
\providecommand{\uu}{\mathbf u }
\providecommand{\vv}{\mathbf v }
\providecommand{\ww}{\mathbf w }

\providecommand{\N}{\mathbb{N}}
\providecommand{\R}{\mathbb{R}}

\providecommand{\eps}{\varepsilon}

\providecommand{\BB}{{\mathbf B }}
\providecommand{\CC}{{\mathbf C }}
\providecommand{\DD}{{\mathbf D }}
\providecommand{\EE}{{\mathbf E }}

\providecommand{\HH}{{\mathbf H }}
\providecommand{\II}{{\mathbf I }}
\providecommand{\JJ}{{\mathbf J }}

\providecommand{\LL}{{\mathbf L }}

\providecommand{\PP}{{\mathbf P }}
\providecommand{\RR}{{\mathbf R }}

\providecommand{\UU}{{\mathbf U }}
\providecommand{\VV}{{\mathbf V }}
\providecommand{\WW}{{\mathbf W }}

\providecommand{\epsbm}{{\mathbf \varepsilon }}
\providecommand{\PPhi}{{\mathbf \Phi }}
\providecommand{\PPsi}{{\mathbf \Psi }}

\providecommand{\PPi}{{\mathbf \Pi }}
\providecommand{\nubm}{\mathbf \nu }
\providecommand{\omegabm}{{\mathbf \omega }}

\newcommand{\curl}{\textnormal{curl}}
\newcommand{\divv}{\textnormal{div}}

\providecommand{\LMEspace}{\LL_2(\Omega)}
\providecommand{\LMHspace}{\HH(\curl,\Omega)}

\providecommand{\LMEspacew}{\LL_{2,\eps(\EE)}(\Omega)}
\providecommand{\LMHspacew}{\LL_2(\Omega)}

\providecommand{\LMEspacet}{\LL_2(\Omega)}
\providecommand{\LMHspacet}{\HH(\curl,\Omega)}

\providecommand{\NEspace}{\HH_0(\curl,\Omega)}

\providecommand{\NBspace}{\HH(\divv,\Omega)}

\providecommand{\NEspacew}{\LL_{2,\eps(\EE)}(\Omega)}
\providecommand{\NHspacew}{\LL_2(\Omega)}
\providecommand{\NBspacew}{\LL_{\mu^{-1}}^{2}(\Omega)}

\providecommand{\NEspacet}{\HH_0(\curl,\Omega)}
\providecommand{\NHspacet}{\LL_2(\Omega)}

\providecommand{\Th}{{\mathcal{T}_h }}

\title{Semi-discrete finite element approximation applied to Maxwell's equations
in nonlinear media}
\author{Lutz Angermann\footnote{%
Institut f\"ur Mathematik, Technische Universit\"at Clausthal,
Erzstra{\ss}e 1, D-38678 Clausthal-Zellerfeld, Germany,
e-mail: lutz.angermann@tu-clausthal.de}}
\date{January 9, 2019}

\begin{document}
\maketitle
\begin{abstract}
In this paper the semi-discrete finite element approximation of initial boundary value problems for
Maxwell's equations in nonlinear media of Kerr-type is investigated.
For the case of N\'ed\'elec elements from the first family,
a priori error estimates are established for the approximation.
\end{abstract}

\textbf{Keywords:}
Semi-discrete finite element method,
nonlinear Maxwell's equations,
error estimate

\textbf{MSC 2010:}
35Q61, 65M60, 65M15

\section{Introduction}
In this paper we investigate the semi-discrete conforming finite element approximation
to the solution of Maxwell's equations for nonlinear media of Kerr-type.
As a concrete example, we consider the (meanwhile classical) N\'ed\'elec elements
from the so-called first family.
To the best knowlege of the author, the nonlinear situation is not yet
well investigated, most works dealing with nonlinear effects are computational
or experimental (see, e.g., \cite{Angermann:18d}).
Here we derive energy (stability) estimates for the weakly formulated
problem and error estimates for the semi-discretized problem.

Let $Q_T:=(0,T)\times\Omega$, where $\Omega\subset\R^3$ is a simply connected domain
with a sufficiently smooth boundary $\partial\Omega$ and $T>0$ is the length of the time
interval under consideration.
Let $\DD,\BB,\EE,\HH:\;Q_T\to\R^3$
represent the displacement field, the magnetic induction,
the electric and magnetic field intensities, respectively.
The time-dependent Maxwell's equations in a nonlinear medium can be written in the form
\begin{align}
\partial_t\DD-\nabla\times\HH&=0 \,\textnormal{ in } Q_T,\label{1.1}\\
\partial_t\BB+\nabla\times\EE &= 0 \,\textnormal{ in } Q_T,\label{1.2}
\end{align}
where the following constitutive relations hold:
\begin{align}\label{de:D}
\BB&:=\mu_0\HH,
\quad
\DD:=\eps_0\EE+\PP(\EE).
\end{align} 
Here $\eps_0>0$ and $\mu_0>0$ are the vacuum permittivity and the permeability, respectively.
Often the constitutive relation for the polarization $\PP=\PP(\EE)$ is approximated
by a truncated Taylor series \cite{Boyd:03}.
In the case of an isotropic material, it takes the form
\begin{align*}
\PP(\EE)&:=\eps_0\Big( \chi^{(1)}\EE
+\chi^{(3)}|\EE|^{2}\EE\Big),
\end{align*}
where $\chi^{(j)}:\;\Omega\to\R$ are the media susceptibility coefficients,
$j=1,3$.
Then from (\ref{de:D}) we obtain the representation
\begin{equation*}
\DD=\eps_0\eps_s\EE
\quad\text{with}\quad
\eps_s=\eps_s(\EE):=1+\chi^{(1)}+\chi^{(3)}|\EE|^2,
\end{equation*}
and it follows by a simple calculation that
\[
\partial_t\DD = 2\eps_0\chi^{(3)}(\EE\cdot\partial_t\EE)\EE
+ \eps_0\eps_s \partial_t\EE
= \eps_0\left(\eps_s\II + \epsbm_m\right)\partial_t\EE
\quad\text{with}\quad
\epsbm_m=\epsbm_m(\EE):=2\chi^{(3)}\EE\EE^\top,
\]
where $\II$ denotes the identy in $\R^d$.
Setting
\[
\eps(\EE):=\eps_0\left(\eps_s(\EE)\II  + \epsbm_m(\EE)\right),
\]
the system (\ref{1.1})--(\ref{de:D}) can be written as
\begin{align}
\eps(\EE)\partial_t\EE-\nabla\times \HH=0 \,\textnormal{ in } Q_T,\label{non:1}\\
\mu_0\partial_t\HH+\nabla\times\EE = 0 \,\textnormal{ in } Q_T.\label{non:2}
\end{align}
Next we state a simple result
which in particular implies that the matrix $\eps(\EE)$ is regular
for all electric field intensities $\EE$ under consideration.
\begin{lemma}\label{l:epsEEposdef}
Let $\chi^{(1)}, \chi^{(3)}\ge 0$ a.e.\ in $\Omega$.
Then the symmetric matrix $\eps(\PPsi)$ is uniformly positive definite
a.e.\ for any $\PPsi\in\R^3$.
\end{lemma}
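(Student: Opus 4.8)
The plan is to verify positive definiteness directly from the definition by evaluating the associated quadratic form on an arbitrary test vector $\vv\in\R^3$. Symmetry is immediate: by construction $\eps(\PPsi)=\eps_0\big(\eps_s(\PPsi)\II+\epsbm_m(\PPsi)\big)$ is a linear combination of the symmetric identity $\II$ and the symmetric rank-one matrix $\epsbm_m(\PPsi)=2\chi^{(3)}\PPsi\PPsi^\top$, so $\eps(\PPsi)^\top=\eps(\PPsi)$.

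For the definiteness, I would compute the quadratic form explicitly, using $\vv^\top(\PPsi\PPsi^\top)\vv=(\PPsi^\top\vv)^2$, to obtain
\[
\vv^\top\eps(\PPsi)\vv
=\eps_0\Big(\big(1+\chi^{(1)}+\chi^{(3)}|\PPsi|^2\big)|\vv|^2+2\chi^{(3)}(\PPsi^\top\vv)^2\Big).
\]
The key observation is that, under the hypothesis $\chi^{(1)},\chi^{(3)}\ge 0$ a.e., every contribution on the right-hand side except the leading $1\cdot|\vv|^2$ is nonnegative: indeed $\chi^{(1)}|\vv|^2\ge 0$, $\chi^{(3)}|\PPsi|^2|\vv|^2\ge 0$, and the rank-one term $2\chi^{(3)}(\PPsi^\top\vv)^2\ge 0$.

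Discarding these nonnegative terms then yields the bound $\vv^\top\eps(\PPsi)\vv\ge\eps_0|\vv|^2$ for all $\vv\in\R^3$ and all $\PPsi\in\R^3$. Since $\eps_0>0$ is a fixed physical constant, this lower bound is independent of both $\PPsi$ and the point in $\Omega$, which is precisely the asserted uniform positive definiteness.

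I expect no genuine obstacle here. The only points requiring a moment's care are the correct expansion of the rank-one matrix $\epsbm_m(\PPsi)$ into the square $(\PPsi^\top\vv)^2$, and the recognition that the sign hypotheses on $\chi^{(1)}$ and $\chi^{(3)}$ are exactly what make all the "extra" terms drop out with the favorable sign, leaving the $\PPsi$-independent vacuum contribution $\eps_0$ as the uniform ellipticity constant.
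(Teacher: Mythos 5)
Your proposal is correct and follows essentially the same route as the paper: both evaluate the quadratic form $\PPhi^\top\eps(\PPsi)\PPhi=\eps_0\big[(1+\chi^{(1)}+\chi^{(3)}|\PPsi|^2)|\PPhi|^2+2\chi^{(3)}|\PPsi\cdot\PPhi|^2\big]$ and drop the nonnegative terms to obtain the uniform lower bound $\eps_0|\PPhi|^2$. The only difference is cosmetic: you make the symmetry observation explicit, which the paper leaves implicit.
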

\begin{proof}
For all $\PPhi\in\R^3$, it holds that
\[
\eps_0^{-1}\PPhi^\top\eps(\PPsi)\PPhi
=\Big(1+\chi^{(1)}+\chi^{(3)}|\PPsi|^2\Big) |\PPhi|^2 + 2\chi^{(3)}|\PPsi\cdot\PPhi|^2
\ge|\PPhi|^2.
\]
\end{proof}
As in \cite{Pototschnig:09}, we denote the inverse by
\[
\CC(\EE):=\big(\eps(\EE)\big)^{-1}.
\]
By means of the Sherman-Morrison formula (see, e.g., \cite{Golub:96b}),
the matrix $\CC(\EE)$ can be given explicitely:
\begin{equation}\label{eq:ShermanMorrison}
\CC(\EE)=\frac{1}{\eps_0}\CC_m(\EE)
\end{equation}
with
\begin{align*}
\CC_m(\EE)&:=\left(\eps_s(\EE)\II  + \epsbm_m(\EE)\right)^{-1}
=\frac{1}{\eps_s(\EE)}\left(\II-\frac{1}{\eps_s(\EE)+2\chi^{(3)}|\EE|^2}\,\epsbm_m(\EE)\right)\\
&=\frac{1}{\eps_s(\EE)}\left(\II-\frac{1}{1+\chi^{(1)}+3\chi^{(3)}|\EE|^2}\,\epsbm_m(\EE)\right).
\end{align*}
Therefore, if the formula (\ref{eq:ShermanMorrison}) holds,
the system (\ref{non:1})--(\ref{non:2}) takes the form
\begin{align}
\eps_0\partial_t\EE-\CC_m(\EE)\nabla\times \HH=0 \,\textnormal{ in } Q_T,\label{non:3}\\
\mu_0\partial_t\HH+\nabla\times\EE = 0 \,\textnormal{ in } Q_T.\label{non:4}
\end{align}
This is an appropriate formulation for the development of time-discrete numerical algorithms,
see, e.g., \cite{Pototschnig:09}.

A perfect conducting boundary condition on $\Omega$ is assumed so that
\begin{align}
\nubm\times\EE=0 \quad\textnormal{on }(0,T)\times\partial\Omega,\label{1.5}
\end{align}
where $\nubm$ denotes the outward unit normal on $\partial\Omega$.
In addition, initial conditions have to be specified so that
\begin{align}
\EE(0,\xx)=\EE_0(\xx)\quad\textnormal{and }\quad\HH(0,\xx)=\HH_0(\xx) 
\quad\textnormal{for all }
\xx\in\Omega, \label{1.6}
\end{align}
where $\EE_0:\;\Omega\to\R^3$ and $\HH_0:\;\Omega\to\R^3$
are given functions, and $\HH_0$ satisfies
\begin{align}
\nabla\cdot(\mu_0\HH_0)=0 \quad\textnormal{in }\Omega,
\quad
\HH_0\cdot\nubm=0 \quad\textnormal{on }\partial\Omega.\label{1.7}
\end{align}
The divergence-free condition in (\ref{1.7}) together with (\ref{non:2}) implies that
\begin{align}
\nabla\cdot(\mu_0\HH)&=0 \quad\textnormal{in } Q_T,\label{1.8}
\end{align}

\section{Notation}
For a real number $p\in[1,\infty]$, the symbol $L^p(\Omega)$ denotes the usual
Lebesgue spaces equipped with the norm $\|\cdot\|_{L^p(\Omega)}$.
The analogous spaces of vector fields $\uu:\Omega\to\R^3$ are denoted by
$\LL_p(\Omega):=\left[L^p(\Omega)\right]^3$ with the norm $\|\cdot\|_{\LL^p(\Omega)}$.

In what follows we have to deal with weighted function spaces.
Given a weight
$\omegabm:\;\Omega\to\R$,
where the values of $\omegabm$ are positive
a.e.\ on $\Omega,$ we define a weighted inner product and a weighted norm by
\begin{equation}\label{eq:L2wip}
(\uu,\vv)_{\omegabm}:=\int_{\Omega}\omegabm\,\uu\cdot\vv\, d\xx
\quad\text{and}\quad
\|\uu\|_{\omegabm}:=\|\uu\|_{\LL_{\omegabm}^2(\Omega)}:=\sqrt{(\uu,\uu)_{\omegabm}}.
\end{equation}
The space $\LL_{\omegabm}^2(\Omega)$ consists of vector fields
$\uu:\Omega\to\R^3$ with Lebesgue-measurable components and such that
\[
\|\uu\|_{\omegabm}<\infty.
\]
In the case $\omegabm= 1,$ the subscript is omitted.
An elementary property of weighted spaces, which we will apply at
different places without special emphasis, is the monotonicity
w.r.t.\ the weight:
If $\omegabm_1$, $\omegabm_2$ are two weights such that $\omegabm_1\le\omegabm_2$
a.e.\ on $\Omega,$ then
\[
\|\uu\|_{\LL_{\omegabm_1}^2(\Omega)}\le\|\uu\|_{\LL_{\omegabm_2}^2(\Omega)}
\quad\text{for all }
\uu\in\LL_{\omegabm_1}^2(\Omega)\cap\LL_{\omegabm_2}^2(\Omega).
\]
As transient problems are addressed, we will work with functions
that depend on time and have values in certain Banach spaces.
If $\uu=\uu(t,\xx)$ is a vector field of the space variable $\xx$
and the time variable $t$, it is suitable to separate these variables in such a way that
$\uu(t)=\uu(t,\cdot)$ is considered as a function of $t$ with values
in a Banach space, say $X$, with the norm $\|\cdot\|_{X}$.
That is, for any $t\in(0,T)$, the mapping $\xx\mapsto\uu(t,\xx)$ is 
interpreted as a parameter-dependent element $\uu(t)$ of $X$.
In this sense we will write $\EE(t)=\EE(t,\cdot)$, $\HH(t)=\HH(t,\cdot)$
and so on.

The space $\CC^{m}(0,T;X)$, $m\in \N\cup\{0\}$, consists of all
continuous functions $\uu:\;(0,T)\to X$ that have continuous derivatives up 
to order $m$ on $(0,T).$ It is equipped with the norm
\begin{align*}
\|\uu\|_{\CC^{m}(0,T;X)}:=\sum_{j=0}^{m}\sup_{t\in(0,T)}\|\uu^{(j)}(t)\|_{X}.
\end{align*} 
For the sake of consistency in the notation we will write
$\CC(0,T;X):=\CC^{0}(0,T;X)$.

The space $\LL_p(0,T;X)$ with $p\in[1,\infty)$ contains (equivalent classes of)
strongly measurable functions $\uu:\;(0,T)\to X$ such that
\begin{align*}
\int_0^T\|\uu(t)\|_{X}^p dt<\infty
\end{align*}
(for the definition of strongly measurable functions we refer to \cite{Kufner:77}).
The norm on $\LL_p(0,T;X)$ is defined by
\begin{align*}
\|\uu\|_{\LL_p(0,T;X)}:=\left\{\int_0^T\|\uu(t)\|_{X}^p dt\right\}^{1/p}.
\end{align*}
These spaces can be equipped with a weight, too. In particular, 
we will write
\begin{align*}
\|\uu\|_{\LL_2(0,T;\LL_\omega^2(\Omega))}
:=\left\{\int_0^T\int_{\Omega}|\uu(t)|^2\,\omega\, d\xx dt\right\}^{1/2}.
\end{align*}
Finally, all the above definitions can be extended to the standard
Sobolov spaces of functions with weak 
spatial derivatives of maximal order $r\in\N$ in $L^p(\Omega)$:
$W^{r,p}(\Omega)$ with norm $\|\cdot\|_{W^{r,p}(\Omega)}$.
If $p=2$, we write 
$H^{r}(\Omega):=W^{r,2}(\Omega)$ and 
$\|\cdot\|_{H^{r}(\Omega)}:=\|\cdot\|_{W^{r,2}(\Omega)}$. 

The space $H_0^1(\Omega)$ is defined as the closure of 
$C_0^{\infty}(\Omega)$ with respect to the norm
$\|\cdot\|_{H^1(\Omega)}$,
where $C_0^{\infty}(\Omega)$ denotes the space of
all arbitrarily often differentiable functions
with compact support on $\Omega$.
It is well knwon that $H_0^1(\Omega)$ is a closed subspace of $H^1(\Omega)$
and consists of elements $u$ such that $u=0$ on $\partial\Omega$ in the sense of 
traces \cite{Adams:03}.
As in the case of the $L^p$-spaces, we shall write
$\WW^{r,p}(\Omega):=[W^{r,p}(\Omega)]^3$ and so on.

Furthermore, we need the following Hilbert spaces that are related to 
the (weak) rotation and divergence operators:
\begin{align*}
\HH(\curl,\Omega)&:=\{\uu\in \LL_2(\Omega):\ \nabla\times\uu\in\LL_2(\Omega)\},\\
\HH_0(\curl,\Omega)&:=\{\uu \in\HH(\curl,\Omega):\ \uu\times\nubm|_{\partial\Omega}=0\},\\
\HH(\divv,\Omega)&:=\{\uu\in \LL_2(\Omega):\ \nabla\cdot\uu\in\LL_2(\Omega)\}.
\end{align*}
These spaces are equipped with the norms (resp.\ induced norms)
\begin{align*}
\|\uu\|_{\HH(\curl,\Omega)}&:=\big\{\|\uu\|_0^2 
+\|\nabla\times\uu\|_0^2\big\}^{1/2},\\
\|\uu\|_{\HH(\divv,\Omega)}&:=\big\{\|\uu\|_0^2+\|\nabla\cdot\uu\|_0^2\big\}^{1/2}.
\end{align*}
We refer to \cite{Duvaut:76}, \cite{Raviart:77}, \cite{Girault:86} and
\cite{Ciarlet:02b}
for details about these spaces.

\section{Weak formulations}
We assume that a solution
\[
(\EE,\HH)\in\big(\CC^1(0,T;\LMEspacew)\cap \CC(0,T;\NEspace)\big)
\times\big(\CC^1(0,T;\LMHspacew)\cap \CC(0,T;\LMHspace)\big)
\]
of the nonlinear Maxwell's equations (\ref{non:1})--(\ref{non:2}) exists and is unique.

We multiply equation (\ref{non:1}) by a test function
$\PPsi\in\LMEspacet$ and integrate over $\Omega$.
Similarly we multiply (\ref{non:2}) by a test function
$\PPhi\in\LMHspacet$, integrate the result over $\Omega$
and integrate by parts the second term.
This shows that it is natural to look for a weak solution
$(\EE,\HH)\in\big(\CC^1(0,T;\LMEspacew)\cap \CC(0,T;\LMEspace)\big)
\times\big(\CC^1(0,T;\LMHspacew)\cap \CC(0,T;\LMHspace)\big)$
of (\ref{non:1})--(\ref{non:2}) such that
\begin{align}
(\eps(\EE)\partial_t\EE,\PPsi)-(\nabla\times\HH,\PPsi)
&=0
&\quad\forall\,\PPsi\in\LMEspacet,\label{3.1a}\\ 
(\mu_0\partial_t\HH,\PPhi)+(\EE,\nabla\times\PPhi)
&=0
&\quad\forall\,\PPhi\in\LMHspacet.\label{3.1b}
\end{align}
Alternatively, the use of test functions $\PPsi\in\NEspacet$
and $\PPhi\in\NHspacet$
and the integration by parts in the equation (\ref{non:1})
leads to the notion of a weak solution
$(\EE,\HH)\in\big(\CC^1(0,T;\NEspacew)\cap \CC(0,T;\NEspace)\big)
\times\big(\CC^1(0,T;\NHspacew)\big)$
of (\ref{non:1})--(\ref{non:2}) such that
\begin{align}
(\eps(\EE)\partial_t\EE,\PPsi)-(\HH,\nabla\times\PPsi)
&=0
&\forall\,\PPsi\in\NEspacet,\label{eq:ned_weak}\\ 
(\mu_0\partial_t\HH,\PPhi)+(\nabla\times\EE,\PPhi)
&=0
&\forall\,\PPhi\in\NHspacet.\label{eq:ravi_weak}
\end{align}
In both cases, the initial conditions (\ref{1.6}) have to be satisfied
at least in the sense of
\linebreak[4]
$\CC(0,T;\LL_2(\Omega))$.
\begin{remark}
As a consequence of the embedding (as sets)
\[
[C_0^{\infty}(\Omega)]^3 \subset [C^{\infty}(\Omega)\cap H^1(\Omega)]^3
\subset [H^1(\Omega)]^3 \subset \HH(\divv, \Omega) \subset \LL_2(\Omega)
\]
and of the fact that $C_0^{\infty}(\Omega)$ is dense in $L^{2}(\Omega)$
we see that $\HH(\divv, \Omega)$ is a dense subset of $\LL_2(\Omega)$
\cite{Adams:03}.
Therefore the test space in (\ref{eq:ravi_weak}) can be reduced to $\NBspace$.
Also note that $\HH\in \CC(0,T;\NBspace)$ due to (\ref{1.8}).
\end{remark}
\begin{remark}
In the case where $\mu_0$ is not a constant but a highly variable function $\mu=\mu(\xx)$
it is more convenient to use the magnetic flux density $\BB=\mu\HH$ instead of $\HH$
as a dependent variable \cite{Makridakis:95}.
In such a case, the formulation (\ref{eq:ned_weak})--(\ref{eq:ravi_weak})
is replaced by
\begin{align}
(\eps(\EE)\partial_t\EE,\PPsi)-(\mu^{-1}\BB,\nabla\times\PPsi)
&=0
&\forall\,\PPsi\in\NEspacet,\label{eq:makrned_weak}\\ 
(\mu^{-1}\partial_t\BB,\PPhi)+(\nabla\times\EE,\mu^{-1}\PPhi)
&=0
&\forall\,\PPhi\in\NBspace,\label{eq:makrravi_weak}
\end{align}
where
\[
(\EE,\BB)\in\big(\CC^1(0,T;\NEspacew)\cap \CC(0,T;\NEspace)\big)
\times\big(\CC^1(0,T;\NBspacew)\big)\cap \CC(0,T;\NBspace)\big).
\]
\end{remark}
Next we will formulate a stability result for the problem (\ref{3.1a})--(\ref{3.1b}).
For this, we extend this problem
to the case of a nontrivial right-hand side for a moment.
\begin{theorem}\label{th:Nstab}
Let $\JJ_e\in\LL_2(0,T;\LL_{2,\eps_0^{-1}(1+\chi^{(1)})^{-1}}(\Omega))$,
$\JJ_m\in\LL_2(0,T;\LL_{2,\mu_0^{-1}}(\Omega))$
-- the electric and magnetic current densities, respectively -- be given
and assume that the system
\begin{align}
(\eps(\EE)\partial_t\EE,\PPsi)-(\nabla\times\HH,\PPsi)
&=-(\JJ_e,\PPsi)
&\quad\forall\,\PPsi\in\LMEspacet,\label{3.1n}\\ 
(\mu_0\partial_t\HH,\PPhi)+(\EE,\nabla\times\PPhi)
&=-(\JJ_m,\PPhi)
&\quad\forall\,\PPhi\in\LMHspacet\label{3.1m}
\end{align}
together with the initial conditions (\ref{1.6})
has a weak solution
\[
(\EE,\HH)\in\big(\CC^1(0,T;\NEspacew)\cap \CC(0,T;\NEspace)\big)
\times\big(\CC^1(0,T;\NHspacew)\big).
\]
If
\[
W(t):=\frac{1}{2}\Big[
\|\EE(t)\|_{\eps_0(1+\chi^{(1)})}^2
+ \frac{3}{2}\big\||\EE(t)|^2\big\|_{\eps_0\chi^{(3)}}^2
+\|\HH(t)\|_{\mu_0}^2\Big]
\]
denotes the nonlinear electromagnetic energy at the time $t$,
the following energy law in differential form
holds:
\begin{equation}\label{eq:nemel1}
\frac{dW}{dt}=-(\JJ_e,\EE)-(\JJ_m,\HH),
\quad
t\in(0,T).
\end{equation}
\end{theorem}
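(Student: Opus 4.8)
The plan is to differentiate the energy functional $W(t)$ term by term and then use the weak formulation to identify the resulting expressions with the right-hand side. The key idea is that the natural test functions $\PPsi = \EE(t)$ and $\PPhi = \HH(t)$ (which are admissible since $\EE(t) \in \NEspace \subset \LMEspacet$ and $\HH(t) \in \LMHspacet$ by the assumed regularity) will turn the weak equations into exactly the energy identity. So first I would compute $\frac{dW}{dt}$ using the chain rule on each of the three terms in $W$.

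\medskip

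The three terms require care. For the magnetic term, $\frac{d}{dt}\,\tfrac12\|\HH\|_{\mu_0}^2 = (\mu_0\partial_t\HH,\HH)$, which is routine. The first electric term gives $\frac{d}{dt}\,\tfrac12\|\EE\|_{\eps_0(1+\chi^{(1)})}^2 = (\eps_0(1+\chi^{(1)})\partial_t\EE,\EE)$. The delicate part is the cubic term $\tfrac12\cdot\tfrac32\big\||\EE|^2\big\|_{\eps_0\chi^{(3)}}^2 = \tfrac34\int_\Omega \eps_0\chi^{(3)}|\EE|^4\,d\xx$, whose time derivative is $3\int_\Omega \eps_0\chi^{(3)}|\EE|^2(\EE\cdot\partial_t\EE)\,d\xx$. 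The main obstacle, and the crux of the proof, is to verify that the sum of these electric contributions equals $(\eps(\EE)\partial_t\EE,\EE)$. I would check this pointwise: using $\eps(\EE)=\eps_0(\eps_s\II+\epsbm_m)$ with $\eps_s = 1+\chi^{(1)}+\chi^{(3)}|\EE|^2$ and $\epsbm_m = 2\chi^{(3)}\EE\EE^\top$, one computes
\[
\EE^\top\eps(\EE)\partial_t\EE
= \eps_0\big(1+\chi^{(1)}+\chi^{(3)}|\EE|^2\big)(\EE\cdot\partial_t\EE)
+ 2\eps_0\chi^{(3)}|\EE|^2(\EE\cdot\partial_t\EE),
\]
so that $(\eps(\EE)\partial_t\EE,\EE) = (\eps_0(1+\chi^{(1)})\partial_t\EE,\EE) + 3\int_\Omega\eps_0\chi^{(3)}|\EE|^2(\EE\cdot\partial_t\EE)\,d\xx$. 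This is precisely the sum of the derivatives of the two electric terms, which is why the factor $\tfrac32$ appears in the definition of $W$.

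\medskip

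Having established $\frac{dW}{dt} = (\eps(\EE)\partial_t\EE,\EE) + (\mu_0\partial_t\HH,\HH)$, I would then substitute the test functions into the weak system. Setting $\PPsi=\EE(t)$ in (\ref{3.1n}) yields $(\eps(\EE)\partial_t\EE,\EE) - (\nabla\times\HH,\EE) = -(\JJ_e,\EE)$, and setting $\PPhi=\HH(t)$ in (\ref{3.1m}) yields $(\mu_0\partial_t\HH,\HH) + (\EE,\nabla\times\HH) = -(\JJ_m,\HH)$. Adding the two equations, the coupling terms $-(\nabla\times\HH,\EE)$ and $(\EE,\nabla\times\HH)$ cancel by symmetry of the $\LL_2$ inner product. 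This leaves $(\eps(\EE)\partial_t\EE,\EE) + (\mu_0\partial_t\HH,\HH) = -(\JJ_e,\EE) - (\JJ_m,\HH)$, which combined with the chain-rule computation gives the claimed identity (\ref{eq:nemel1}). The regularity assumption $(\EE,\HH)\in\CC^1(0,T;\dots)$ ensures all derivatives and integrals are well defined and that differentiation under the integral sign is justified.
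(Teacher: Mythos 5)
Your proposal is correct and follows essentially the same route as the paper's proof: the paper likewise takes $\PPsi=\EE$, $\PPhi=\HH$ in (\ref{3.1n})--(\ref{3.1m}), adds the equations so the curl terms cancel, and then verifies the same pointwise identity $\EE\cdot(\eps(\EE)\partial_t\EE)=\frac{\eps_0}{2}(1+\chi^{(1)})\partial_t|\EE|^2+\frac{3\eps_0}{4}\chi^{(3)}\partial_t|\EE|^4$ to identify the left-hand side with $\frac{dW}{dt}$. The only difference is order of presentation (you differentiate $W$ first and then match; the paper matches after adding the tested equations), which is immaterial.
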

\begin{proof}
Taking $\PPsi=\EE$ and $\PPhi=\HH$ in (\ref{3.1n})--(\ref{3.1m})
and adding the result gives
\begin{equation}\label{eq:multadd1}
(\eps(\EE)\partial_t\EE,\EE)+(\mu_0\partial_t\HH,\HH)
=-(\JJ_e,\EE)-(\JJ_m,\HH)
\end{equation}
An elementary calculation shows that
\begin{align*}
\eps(\EE)\partial_t\EE
&=\eps_0\left((1+\chi^{(1)}+\chi^{(3)}|\EE|^2)\II + 2\chi^{(3)}\EE\EE^\top\right)\partial_t\EE\\
&=\eps_0(1+\chi^{(1)}+\chi^{(3)}|\EE|^2)\partial_t\EE + 2\eps_0\chi^{(3)}\EE(\EE\cdot\partial_t\EE)\\
&=\eps_0(1+\chi^{(1)}+\chi^{(3)}|\EE|^2)\partial_t\EE + \eps_0\chi^{(3)}\EE\partial_t|\EE|^2,
\end{align*}
hence
\begin{align*}
\EE\cdot(\eps(\EE)\partial_t\EE)
&=\eps_0(1+\chi^{(1)}+\chi^{(3)}|\EE|^2)\EE\cdot\partial_t\EE
+ \eps_0\chi^{(3)}\EE\cdot\EE\partial_t|\EE|^2\\
&=\frac{\eps_0}{2}(1+\chi^{(1)}+\chi^{(3)}|\EE|^2)\partial_t|\EE|^2
+ \eps_0\chi^{(3)}|\EE|^2\partial_t|\EE|^2\\
&=\frac{\eps_0}{2}(1+\chi^{(1)})\partial_t|\EE|^2
+ \frac{3\eps_0}{2}\chi^{(3)}|\EE|^2\partial_t|\EE|^2\\
&=\frac{\eps_0}{2}(1+\chi^{(1)})\partial_t|\EE|^2
+ \frac{3\eps_0}{4}\chi^{(3)}\partial_t|\EE|^4.
\end{align*}
Then it follows from equation (\ref{eq:multadd1}) that
\[
\frac{dW}{dt}
=\frac{1}{2}\frac{d}{dt}\Big[
\|\EE\|_{\eps_0(1+\chi^{(1)})}^2
+ \frac{3}{2}\big\||\EE|^2\big\|_{\eps_0\chi^{(3)}}^2
+\|\HH\|_{\mu_0}^2\Big]
=-(\JJ_e,\EE)-(\JJ_m,\HH).
\]
\end{proof}
\begin{corollary}\label{cor:Nstab}
Under the assumptions of Thm.\ \ref{th:Nstab}, for all $t\in(0,T)$,
the following estimate is valid:
\[
W(t)\le 2 W(0) 
+t\Big[\|\JJ_e\|_{\LL_2(0,T;\LL_{2,\eps_0^{-1}(1+\chi^{(1)})^{-1}}(\Omega))}^{2}
+\|\JJ_m\|_{\LL_2(0,T;\LL_{2,\mu_0^{-1}}(\Omega))}^{2}\Big].
\]
\end{corollary}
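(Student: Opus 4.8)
The plan is to integrate the differential energy law (\ref{eq:nemel1}) from Theorem \ref{th:Nstab} over the interval $(0,t)$ and then bound the resulting source terms by Cauchy--Schwarz and Young's inequality. First I would write
\[
W(t)=W(0)-\int_0^t\big[(\JJ_e(s),\EE(s))+(\JJ_m(s),\HH(s))\big]\,ds,
\]
which is immediate from (\ref{eq:nemel1}). The task then reduces to estimating the two integral terms from above in a way that reproduces the weighted $\LL_2(0,T;\cdot)$ norms appearing on the right-hand side of the claim while leaving a multiple of $W$ that can be absorbed.

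The key algebraic step is to split each inner product symmetrically against the weight. For the electric term, I would write $(\JJ_e,\EE)$ as a pairing in the weight $\eps_0^{-1}(1+\chi^{(1)})^{-1}$ against $\eps_0(1+\chi^{(1)})$, so that Cauchy--Schwarz gives
\[
|(\JJ_e,\EE)|\le \|\JJ_e\|_{\eps_0^{-1}(1+\chi^{(1)})^{-1}}\,\|\EE\|_{\eps_0(1+\chi^{(1)})},
\]
and analogously $|(\JJ_m,\HH)|\le \|\JJ_m\|_{\mu_0^{-1}}\|\HH\|_{\mu_0}$. Applying Young's inequality $ab\le\tfrac12 a^2+\tfrac12 b^2$ to each product then produces, after integration in $s$,
\[
\int_0^t|(\JJ_e,\EE)|\,ds\le \tfrac12\int_0^t\|\JJ_e\|_{\eps_0^{-1}(1+\chi^{(1)})^{-1}}^2\,ds
+\tfrac12\int_0^t\|\EE\|_{\eps_0(1+\chi^{(1)})}^2\,ds,
\]
and similarly for the magnetic term. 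The second integral in each case is controlled by $W$ itself, since by definition $\|\EE(s)\|_{\eps_0(1+\chi^{(1)})}^2+\|\HH(s)\|_{\mu_0}^2\le 2W(s)$. The first integrals are exactly the squared weighted space-time norms $\|\JJ_e\|_{\LL_2(0,T;\LL_{2,\eps_0^{-1}(1+\chi^{(1)})^{-1}}(\Omega))}^2$ and $\|\JJ_m\|_{\LL_2(0,T;\LL_{2,\mu_0^{-1}}(\Omega))}^2$, with the integration extended harmlessly to the full interval $(0,T)$.

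The main obstacle, and the reason for the factor $2W(0)$ rather than $W(0)$ in the statement, is the presence of the integral $\int_0^t W(s)\,ds$ on the right-hand side after the absorption step: this is a Gronwall-type situation. I would invoke Gronwall's inequality (in integral form) to close the estimate, obtaining a bound of the form $W(t)\le e^{t}\big(W(0)+\text{(source terms)}\big)$ or, in the sharper linear form that the stated constant $t$ and coefficient $2$ suggest, an elementary bootstrap. Care must be taken to verify that the nonlinear term $\tfrac32\||\EE|^2\|_{\eps_0\chi^{(3)}}^2$ in $W$ only helps (it is nonnegative and does not appear in the source pairings), so that the inequality $\|\EE\|_{\eps_0(1+\chi^{(1)})}^2+\|\HH\|_{\mu_0}^2\le 2W$ is the correct way to reintroduce $W$; this is where the nonlinearity is handled cleanly, since $\EE\cdot(\eps(\EE)\partial_t\EE)$ was already shown in Theorem \ref{th:Nstab} to be a perfect time derivative of $W$'s integrand.
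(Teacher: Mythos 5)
Your first two steps (integrating (\ref{eq:nemel1}) in time and applying the weighted Cauchy--Schwarz inequality to each pairing) agree with the paper, but the way you close the estimate is a genuine gap. Write
\[
S:=\|\JJ_e\|_{\LL_2(0,T;\LL_{2,\eps_0^{-1}(1+\chi^{(1)})^{-1}}(\Omega))}^{2}
+\|\JJ_m\|_{\LL_2(0,T;\LL_{2,\mu_0^{-1}}(\Omega))}^{2}.
\]
After Young's inequality and the absorption
$\|\EE\|_{\eps_0(1+\chi^{(1)})}^2+\|\HH\|_{\mu_0}^2\le 2W$,
what you have actually proved is
\[
W(t)\le W(0)+\tfrac12 S+\int_0^t W(s)\,ds,
\]
and from this inequality the corollary cannot be recovered. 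Gronwall gives $W(t)\le \big(W(0)+\tfrac12 S\big)e^{t}$, which is exponentially weaker than the claimed $W(t)\le 2W(0)+tS$, and no ``elementary bootstrap'' can repair this, because your integral inequality is genuinely consistent with exponential growth: take $\JJ_e=\JJ_m=0$, so $S=0$; then your inequality reads $W(t)\le W(0)+\int_0^t W(s)\,ds$, which is satisfied by the function $W(t)=W(0)e^{t}$, whereas the corollary demands $W(t)\le 2W(0)$. The information is destroyed precisely at the Young step: it replaces the product $\|\JJ_e\|_{\eps_0^{-1}(1+\chi^{(1)})^{-1}}\|\EE\|_{\eps_0(1+\chi^{(1)})}$, which vanishes when $\JJ_e=0$, by the term $\tfrac12\|\EE\|_{\eps_0(1+\chi^{(1)})}^2$, which does not.

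The paper's proof never splits the product. It bounds the right-hand side of (\ref{eq:nemel1}) by
\[
|(\JJ_e,\EE)+(\JJ_m,\HH)|
\le\sqrt{2}\left\{\|\JJ_e\|_{\eps_0^{-1}(1+\chi^{(1)})^{-1}}^{2}
+\|\JJ_m\|_{\mu_0^{-1}}^{2}\right\}^{1/2}\sqrt{W(t)},
\]
keeping one factor of $\sqrt{W}$, and then divides it out: since $\frac{d}{dt}\sqrt{W}=\frac{1}{2\sqrt{W}}\frac{dW}{dt}$ (the case $W=0$ being trivial), the energy law yields the kickback-free differential inequality
\[
\sqrt{2}\,\frac{d}{dt}\sqrt{W(t)}
\le\left\{\|\JJ_e\|_{\eps_0^{-1}(1+\chi^{(1)})^{-1}}^{2}
+\|\JJ_m\|_{\mu_0^{-1}}^{2}\right\}^{1/2},
\]
whose right-hand side no longer contains $W$, so no Gronwall-type lemma is needed at all. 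Plain integration, squaring via $(a+b)^2\le 2a^2+2b^2$ (this is where the factor $2$ in front of $W(0)$ originates), and the Cauchy--Schwarz inequality in time (which produces the factor $t$ in front of $S$) give exactly the stated bound. Your observation that the nonnegative nonlinear part of $W$ only helps, via $\|\EE\|_{\eps_0(1+\chi^{(1)})}^2+\|\HH\|_{\mu_0}^2\le 2W$, is correct and is used by the paper in the same way; but to obtain the stated constants the argument must be run at the level of $\sqrt{W}$ rather than of $W$.
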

\begin{proof}
We estimate the right-hand side of (\ref{eq:nemel1})
by means of the Cauchy-Bunyakovski-Schwarz' inequality
(twice -- in the integral version and in the finite sum version):
\begin{align*}
|(\JJ_e,\EE) &+ (\JJ_m,\HH)|
\le\|\JJ_e\|_{\eps_0^{-1}(1+\chi^{(1)})^{-1}}\|\EE\|_{\eps_0(1+\chi^{(1)})}
+\|\JJ_m\|_{\mu_0^{-1}}\|\HH\|_{\mu_0}\\
&\le\left\{\|\JJ_e\|_{\eps_0^{-1}(1+\chi^{(1)})^{-1}}^{2}
+\|\JJ_m\|_{\mu_0^{-1}}^{2}\right\}^{1/2}
\left\{\|\EE\|_{\eps_0(1+\chi^{(1)})}^{2}+\|\HH\|_{2,\mu_0}\right\}^{1/2}\\
&\le \sqrt{2}\left\{\|\JJ_e\|_{\eps_0^{-1}(1+\chi^{(1)})^{-1}}^{2}
+\|\JJ_m\|_{\mu_0^{-1}}^{2}\right\}^{1/2}
\sqrt{W(t)}.
\end{align*}
Since $\frac{d}{dt}\sqrt{W}=\frac{1}{2\sqrt{W}}\frac{dW}{dt}$ for $W(t)>0$
(the case $W(t)=0$ is trivial),
it follows from (\ref{eq:nemel1}) that
\[
\sqrt{2}\,\frac{d}{dt}\sqrt{W(t)}
\le\left\{\|\JJ_e\|_{\eps_0^{-1}(1+\chi^{(1)})^{-1}}^{2}
+\|\JJ_m\|_{\mu_0^{-1}}^{2}\right\}^{1/2}.
\]
Integrating this inequality w.r.t.\ $t$, we get
\[
\sqrt{2}\sqrt{W(t)} \le \sqrt{2}\sqrt{W(0)}
+\int_0^t\left\{\|\JJ_e\|_{\eps_0^{-1}(1+\chi^{(1)})^{-1}}^{2}
+\|\JJ_m\|_{\mu_0^{-1}}^{2}\right\}^{1/2}ds.
\]
Then
\begin{align*}
W(t) &\le 2 W(0) 
+\left(\int_0^t\left\{\|\JJ_e\|_{\eps_0^{-1}(1+\chi^{(1)})^{-1}}^{2}
+\|\JJ_m\|_{\mu_0^{-1}}^{2}\right\}^{1/2}ds\right)^{2}\\
&\le2 W(0) 
+t\int_0^t\left[\|\JJ_e\|_{\eps_0^{-1}(1+\chi^{(1)})^{-1}}^{2}
+\|\JJ_m\|_{\mu_0^{-1}}^{2}\right]ds\\
&\le 2 W(0) 
+t\Big[\|\JJ_e\|_{\LL_2(0,T;\LL_{2,\eps_0^{-1}(1+\chi^{(1)})^{-1}}(\Omega))}^{2}
+\|\JJ_m\|_{\LL_2(0,T;\LL_{2,\mu_0^{-1}}(\Omega))}^{2}\Big].
\end{align*}
\end{proof}
\begin{remark}
Analogous results as in Thm.\ \ref{th:Nstab} and Cor.\ \ref{cor:Nstab} can be obtained for
the corresponding ``non-homogeneous'' version of (\ref{eq:ned_weak})--(\ref{eq:ravi_weak})
and for the subsequent semi-discretizations.
\end{remark}

\section{Spatial discretization}
\subsection{Semi-discretization of the weak formulations}
Let $\WW_h\subset\LMEspace$,
$\UU_h\subset\LMHspace$,
$\UU_{0h}\subset\NEspace$,
and $\VV_h\subset\NBspace$ be finite-dimensional subspaces.

The semi-discrete (in space) problem for the system (\ref{3.1a})--(\ref{3.1b}) consists
in determining elements $(\EE_h,\HH_h)\in\CC^1(0,T;\WW_h)\times \CC^1(0,T;\UU_h)$
such that
\begin{align}
(\eps(\EE_h)\partial_t\EE_h, \PPsi_h)-(\nabla\times\HH_h,\PPsi_h)
&=0
&\forall\,\PPsi_h\in\WW_h\label{3.6a},\\
(\mu_0\partial_t\HH_h,\PPhi_h)+(\EE_h,\nabla\times\PPhi_h)
&=0
&\forall\,\PPhi_h\in\UU_h\label{3.6b}.
\end{align}
For the the equations (\ref{eq:ned_weak})--(\ref{eq:ravi_weak}),
the semi-discrete problem involves the determination of elements
$(\EE_h,\HH_h)\in \CC^1(0,T;\UU_{0h})\times \CC^1(0,T;\VV_h)$
such that
\begin{align}
(\eps(\EE_h)\partial_t\EE_h, \PPsi_h)-(\HH_h,\nabla\times\PPsi_h)
&=0
&\forall\,\PPsi_h\in\UU_{0h}\label{eq:dis_ned},\\
(\mu_0\partial_t\HH_h,\PPhi_h)+(\nabla\times\EE_h,\PPhi_h)
&=0
&\forall\,\PPhi_h\in\VV_h\label{eq:dis_ravi}.
\end{align}
The initial conditions for both problems read formally as
\begin{align*}
\EE_h(0,\xx)=\EE_{0h}(\xx)\quad\textnormal{and }\quad\HH_h(0,\xx)=\HH_{0h}(\xx) 
\quad\textnormal{for all }
\xx\in\Omega,
\end{align*}
where the concrete requirements to the particular choice of the discrete initial data
$(\EE_{0h},\HH_{0h})$ will be seen later (Thm.\ \ref{th:sdeLM}).

\subsection{The choice of the finite element spaces}
In the rest of the paper we will choose the so-called first family
of N\'ed\'elec edge elements, usually denoted by $\DD_{k}$ and $\RR_{k}$ for $k\in\N$,
for the construction of the concrete finite element spaces 
(for details see \cite{Nedelec:80} or \cite[Ch.~5]{Monk:03}).
That is, given an arbitrary member $\Th$ of a family of triangulations of $\Omega$
consisting of open tetrahedra $K$, we set
\begin{align*}
\UU_h&:=\{\ww\in\HH(\curl;\Omega):\;\ww|_K\in\RR_k\ \forall K\in\Th\},\\
\VV_h&:=\{\ww\in\HH(\divv;\Omega):\;\ww|_K\in\DD_k\ \forall K\in\Th\},\\
\WW_h&:=\{\ww\in\LL_2(\Omega):\;\ww|_{K}\in\PP_{k-1}\ \forall K\in\Th\},
\end{align*}
where $\PP_\ell:=[\mathcal{P}_\ell]^3$ and $\mathcal{P}_\ell$ is the space
of scalar real-valued polynomials
in three variables of maximal degree $\ell\in\N\cup\{0\}$.
To deal with the case of the N\'ed\'elec formulation (\ref{eq:ned_weak})--(\ref{eq:ravi_weak}),
we still have to introduce the space
$\UU_{0h}:=\UU_h\cap\NEspace$.
In the subsequent error analysis, we will make use of some projection operators.
For the Lee-Madsen formulation (\ref{3.6a})--(\ref{3.6b}),
we need projections
$\PP_h:\;\LMEspace\to\WW_h$, $\Pi_h:\;\LMHspace\to\UU_h$.

Let $\PP_h$ be the standard $\LL_2(\Omega)$-projection operator onto $\WW_h$,
i.e.\ for given $\ww\in\LL_2(\Omega)$ the image $\PP_h\ww\in\WW_h$ is defined by
\begin{equation}\label{eq:L2projection}
(\PP_h\ww, \PPsi_h)=(\ww, \PPsi_h), \quad\forall\,\PPsi\in\WW_h.
\end{equation}
For this operator the following standard error estimate holds:
If $\ww\in\HH^k(\Omega)$, then
\begin{align}\label{eq:errL2projection}
\|\ww-\PP_h\ww\|\le Ch^k\|\ww\|_{\HH^k(\Omega)}.
\end{align}
Moreover, since
$\nabla\times\UU_h\subset\WW_h$ (see the beginnings of the proofs of \cite[Thm.\ 3.3]{Monk:91}
or \cite[Lemma 5.40]{Monk:03}),
it holds that
\begin{equation}\label{eq:L2projection1}
(\PP_h\ww, \nabla\times\PPhi_h)=(\ww, \nabla\times\PPhi_h), \quad\forall\,\PPhi\in\UU_h.
\end{equation}
Next, for $\vv\in\LMHspace$ we define $\PPi_h\vv\in\UU_h$ by
\begin{align}
\big(\nabla\times\PPi_h\vv,\nabla\times\PPsi_h\big)
&=\big(\nabla\times\vv,\nabla\times\PPsi_h\big)
\quad
\forall\,\PPsi_h\in\UU_h,\label{eq:pi1a}\\
(\PPi_h\vv,\nabla p_h)&=(\vv,\nabla p_h)
\quad
\forall p_h\in S_h^k,\label{eq:pi1b}
\end{align}
where $S_h^k$ is defined as
\[
S_h^k:=\{v\in H_1(\Omega)/\R:\; v|_{K}\in \mathcal{P}_k
\ \forall K\in \mathcal{T}_h\},
\]
see \cite[Subsect.\ 4.2]{Monk:91}.

If $\vv\in\HH^{k+1}(\Omega)$ such that $\nabla\cdot\vv=0$ in $\Omega$
and $\nubm\cdot\vv=0$ on $\partial\Omega$, then there exists a constant $C>0$ such that
\begin{equation}\label{eq:errorpi1}
\|\vv-\PPi_h\vv\|
\le Ch^k\|\vv\|_{\HH^{k+1}(\Omega)}
\end{equation}
(see \cite[Thm.\ 4.6]{Monk:91}).

\section{An error estimate for the semi-discrete problem}

In this section we formulate and prove the main result.
\begin{theorem}[Semi-discrete error estimate for the Lee-Madsen formulation]
\label{th:sdeLM}
Let $k\in\N$,
$\chi^{(1)},\chi^{(3)}\in L_\infty(\Omega)$,
$\EE_0\in\LL_\infty(\Omega),$
$\HH_0\in\LL_{2,\mu_0}(\Omega)$ satisfying (\ref{1.7}),
\[
\big(\EE, \HH\big)\in
\big(\CC(0,T;\LL_\infty(\Omega)\cap\NEspace)\cap\EE\in\CC^1(0,T;\HH^k(\Omega))\big)
\times \CC^1(0,T;\HH^{k+1}(\Omega))
\]
be the weak solution of the system (\ref{3.1a})--(\ref{3.1b}), and
\[
\big(\EE_h,\HH_h\big)\in\CC(0,T;\WW_h)\cap\CC^1(0,T;\LL_\infty(\Omega))
\times\CC(0,T;\UU_h)
\]
be the finite element solution of the system (\ref{3.6a})--(\ref{3.6b}) respectively,
where the inclusion is to be understood uniformly w.r.t.\ the mesh parameter $h$
in the sense that
$\|\EE_h\|_{\CC^1(0,T;\LL_\infty(\Omega))}$ is bounded by a constant independent of $h$.
Then there exists a constant $C>0$ independent of $h$
such that the following error estimate holds:
\begin{align*}
&\|\EE_h(T)-\EE(T)\|_{\eps_{0}}+\|\HH_h(T)-\HH(T)\|_{\mu_{0}} \nonumber\\
&\le C \left[\|\PP_h \EE_{0}-\EE_{0h}\|_{\eps_{0}}
+\|\Pi_h \HH_{0}-\HH_{0h}\|_{\mu_{0}}+h^k\right]
\end{align*}
(the detailed structure of the bound is given at the end of the proof).
\end{theorem}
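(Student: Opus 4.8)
The plan is to run the classical semi-discrete energy argument for mixed finite element schemes, isolating the genuinely discrete part of the error and closing it with Gronwall's lemma; the only essential difficulty is the nonlinear coefficient $\eps(\cdot)$. First I would split the errors through the two projections by writing $\EE_h-\EE=\xi_h-\rho$ and $\HH_h-\HH=\eta_h-\sigma$, where $\xi_h:=\EE_h-\PP_h\EE\in\WW_h$ and $\eta_h:=\HH_h-\Pi_h\HH\in\UU_h$ are the discrete error components, while $\rho:=\EE-\PP_h\EE$ and $\sigma:=\HH-\Pi_h\HH$ are the projection errors. The latter are controlled directly: by (\ref{eq:errL2projection}) one has $\|\rho\|,\|\partial_t\rho\|\le Ch^k$ and by (\ref{eq:errorpi1}) $\|\sigma\|,\|\partial_t\sigma\|\le Ch^k$ (here $\partial_t$ commutes with the linear projections, and the hypotheses $\nabla\cdot(\mu_0\HH)=0$, $\HH\cdot\nubm=0$ transfer to $\partial_t\HH$, so that (\ref{eq:errorpi1}) applies). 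In addition, since by (\ref{eq:pi1a}) the field $\nabla\times\Pi_h\HH$ is the $\LL_2$-best approximation of $\nabla\times\HH$ in $\nabla\times\UU_h$, I also obtain $\|\nabla\times\sigma\|\le Ch^k\|\HH\|_{\HH^{k+1}}$.

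Next I would test the continuous equations (\ref{3.1a})--(\ref{3.1b}) with the discrete functions $\PPsi_h=\xi_h\in\WW_h$ and $\PPhi_h=\eta_h\in\UU_h$, subtract the semi-discrete equations (\ref{3.6a})--(\ref{3.6b}), and add the two resulting identities. The antisymmetric curl pairings $-(\nabla\times\eta_h,\xi_h)$ and $(\xi_h,\nabla\times\eta_h)$ cancel, exactly as in the linear mixed theory; the cross term $(\rho,\nabla\times\eta_h)$ vanishes identically because of the orthogonality (\ref{eq:L2projection1}) of $\PP_h$ to $\nabla\times\UU_h$. What survives on the right-hand side is the single consistency term $-(\nabla\times\sigma,\xi_h)$, already controlled by the bound on $\|\nabla\times\sigma\|$ above, while the magnetic time term is rewritten routinely as $(\mu_0\partial_t(\HH_h-\HH),\eta_h)=\tfrac12\tfrac{d}{dt}\|\eta_h\|_{\mu_0}^2-(\mu_0\partial_t\sigma,\eta_h)$.

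The crux is the nonlinear term $(\eps(\EE_h)\partial_t\EE_h-\eps(\EE)\partial_t\EE,\xi_h)$. I would split it as $(\eps(\EE_h)\partial_t(\EE_h-\EE),\xi_h)+((\eps(\EE_h)-\eps(\EE))\partial_t\EE,\xi_h)$ and, using $\partial_t(\EE_h-\EE)=\partial_t\xi_h-\partial_t\rho$ together with the symmetry of $\eps(\EE_h)$, extract the energy term
\[
(\eps(\EE_h)\partial_t\xi_h,\xi_h)=\tfrac12\tfrac{d}{dt}(\eps(\EE_h)\xi_h,\xi_h)-\tfrac12((\partial_t\eps(\EE_h))\xi_h,\xi_h).
\]
This motivates the discrete energy $N(t)^2:=(\eps(\EE_h)\xi_h,\xi_h)+\|\eta_h\|_{\mu_0}^2$, which by Lemma \ref{l:epsEEposdef} (lower bound) and the uniform $\LL_\infty$-bound on $\EE_h$ (upper bound) is equivalent to $\|\xi_h\|^2+\|\eta_h\|^2$; note that it already yields $\|\xi_h\|_{\eps_0}^2\le(\eps(\EE_h)\xi_h,\xi_h)$. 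The remaining contributions are then absorbed: the correction $\tfrac12((\partial_t\eps(\EE_h))\xi_h,\xi_h)$ is bounded by $CN^2$ via $\|\partial_t\eps(\EE_h)\|_\infty\le C$, which is precisely where the assumed uniform $\CC^1(0,T;\LL_\infty)$ bound on $\EE_h$ (hence on $\partial_t\EE_h$) enters; the term $(\eps(\EE_h)\partial_t\rho,\xi_h)$ is bounded by $CN^2+Ch^{2k}$; and the genuinely nonlinear term is estimated through the Lipschitz property $|\eps(\EE_h)-\eps(\EE)|\le C|\EE_h-\EE|$ (with constant depending on $\|\EE_h\|_\infty,\|\EE\|_\infty$) as $\le C\|\partial_t\EE\|_\infty\|\EE_h-\EE\|\,\|\xi_h\|\le CN^2+Ch^{2k}$, since $\EE_h-\EE=\xi_h-\rho$.

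Collecting everything yields $\tfrac{d}{dt}N^2\le CN^2+Ch^{2k}R(t)$, with $R$ a combination of $\HH^k$- and $\HH^{k+1}$-norms of $\EE,\partial_t\EE,\HH,\partial_t\HH$; Gronwall's lemma then gives $N(T)^2\le C\big(N(0)^2+h^{2k}\big)$, where $N(0)^2\simeq\|\PP_h\EE_0-\EE_{0h}\|^2+\|\Pi_h\HH_0-\HH_{0h}\|^2$. Taking square roots and applying the triangle inequalities $\|\EE_h-\EE\|_{\eps_0}\le\|\xi_h\|_{\eps_0}+\|\rho\|_{\eps_0}$ and $\|\HH_h-\HH\|_{\mu_0}\le\|\eta_h\|_{\mu_0}+\|\sigma\|_{\mu_0}$ at $t=T$, together with $\|\rho(T)\|,\|\sigma(T)\|=O(h^k)$, produces the asserted bound. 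The main obstacle is the nonlinear coefficient: because the natural discrete energy carries the solution-dependent, time-varying weight $\eps(\EE_h)$, the whole argument hinges on the (assumed, not proven) uniform-in-$h$ $\LL_\infty$ control of $\EE_h$ and $\partial_t\EE_h$, without which neither the norm equivalence nor the bound on $\|\partial_t\eps(\EE_h)\|_\infty$ would be available; likewise the Lipschitz term must keep $\partial_t\EE$ in $\LL_\infty$ (which is why the regularity $\EE\in\CC^1(0,T;\HH^k)$ with $\HH^k\hookrightarrow\LL_\infty$ is invoked) in order to retain the quadratic-plus-$O(h^{2k})$ structure required for Gronwall's lemma.
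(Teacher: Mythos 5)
Your proof is correct, and its skeleton --- splitting the error through the projections $\PP_h$ and $\PPi_h$, testing with the discrete error components, an energy identity for the nonlinear term, Gronwall, and a final triangle inequality --- is the same as the paper's; the differences lie precisely in the two non-routine spots, and there they are genuine. First, the nonlinear term: you split it as $(\eps(\EE_h)\partial_t(\EE_h-\EE),\xi_h)+((\eps(\EE_h)-\eps(\EE))\partial_t\EE,\xi_h)$, so your discrete energy carries the weight $\eps(\EE_h)$, the correction term $((\partial_t\eps(\EE_h))\xi_h,\xi_h)$ is paid for by the assumed uniform $\CC^1(0,T;\LL_\infty(\Omega))$ bound on $\EE_h$, and the Lipschitz term is paid for by $\|\partial_t\EE\|_{\LL_\infty(\Omega)}$. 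The paper performs the mirror-image split (component-wise, via the terms $\delta_{11},\dots,\delta_{34}$): its energy carries the weight $\eps(\EE)$, its correction terms carry $\partial_t\EE$, and its Lipschitz-type remainders carry $\partial_t\EE_h$. The two routes consume exactly the same hypotheses; yours is algebraically more compact, while the paper's bookkeeping produces the explicit constants $C_1,\dots,C_6$ whose ``detailed structure'' the theorem promises. Second, the curl consistency term: the paper shows that $(\nabla\times(\HH-\PPi_h\HH),\PPsi_h)=0$ for all $\PPsi_h\in\WW_h$ exactly, using $\nabla\times\UU_h\subset\WW_h$ together with (\ref{eq:pi1a}) and the cited property from Monk, so it never needs any bound on $\|\nabla\times(\HH-\PPi_h\HH)\|$; you instead keep this term and bound it by $Ch^k$ via the best-approximation property implied by (\ref{eq:pi1a}), which is legitimate but requires an additional (standard, yet nowhere stated in the paper) approximation estimate for $\nabla\times\UU_h$, e.g.\ through the N\'ed\'elec interpolant. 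Third, you close with Young's inequality plus standard Gronwall, getting the $N^2(0)+h^{2k}$ structure, whereas the paper feeds the mixed term $h^k w_h(s)$ directly into the Gronwall-type lemma of Dafermos; the resulting bounds are equivalent. One shared caveat: like the paper, you need $\partial_t\EE\in\LL_\infty(\Omega)$, which the stated hypotheses deliver only through $\HH^k(\Omega)\hookrightarrow\LL_\infty(\Omega)$, i.e.\ for $k\ge 2$ --- you at least flag this dependence explicitly, while the paper uses it silently.
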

\begin{proof}
We set $\PPsi:=\PPsi_h\in\WW_h$ in (\ref{3.1a}) and
$\PPhi:=\PPhi_h\in\UU_h$ in (\ref{3.1b}):
\begin{align*}
(\eps(\EE)\partial_t\EE,\PPsi_h)-(\nabla\times\HH,\PPsi_h)
&=0
&\quad\forall\,\PPsi_h\in\WW_h, \\
(\mu_0\partial_t\HH,\PPhi_h)+(\EE,\nabla\times\PPhi_h)
&=0
&\quad\forall\,\PPhi_h\in\UU_h.
\end{align*}
By means of the projection operators $\PP_h$ and $\PPi_h$
defined in (\ref{eq:L2projection}) and (\ref{eq:pi1a})--(\ref{eq:pi1b}), resp.,
from this we get
\begin{align}
(\eps(\EE)\partial_t\EE,\PPsi_h)-(\nabla\times\PPi_h\HH,\PPsi_h)
&=(\nabla\times(\HH-\PPi_h\HH),\PPsi_h)
&\forall\,\PPsi_h\in\WW_h, \label{3.1dta}\\
(\mu_0\partial_t\PPi_h\HH,\PPhi_h)+(\PP_h\EE,\nabla\times\PPhi_h)
&=\mu_0(\PPi_h\partial_t\HH-\partial_t\HH,\PPhi_h) \nonumber\\
&\quad +\mu_0(\partial_t\PPi_h\HH-\PPi_h\partial_t\HH,\PPhi_h) \label{3.1dtb}\\
&\quad +(\PP_h\EE-\EE,\nabla\times\PPhi_h)
&\forall\,\PPhi_h\in\UU_h.
\nonumber
\end{align}
The last term on the right-hand side of (\ref{3.1dta}) vanishes thanks to
the properties of $\PPi_h$, see \cite[eq.\ (2.4)]{Monk:91}
and (\ref{eq:pi1a}).

The second term on the right-hand side of (\ref{3.1dtb}) can be omitted because of the
commutation property $\partial_t\PPi_h\HH=\PPi_h\partial_t\HH$,
which results from the continuity properties of the operator $\PPi_h$.
The last term on the right-hand side vanishes thanks to
the property (\ref{eq:L2projection1}) of $\PP_h$.

Therefore (\ref{3.1dta})--(\ref{3.1dtb}) simplify to
\begin{align}
(\eps(\EE)\partial_t\EE,\PPsi_h)-(\nabla\times\PPi_h\HH,\PPsi_h)
&=0
&\quad\forall\,\PPsi_h\in\WW_h, \label{3.1dtc}\\
(\mu_0\partial_t\PPi_h\HH,\PPhi_h)+(\PP_h\EE,\nabla\times\PPhi_h)
&=\mu_0(\PPi_h\partial_t\HH-\partial_t\HH,\PPhi_h)
&\quad\forall\,\PPhi_h\in\UU_h.  \label{3.1dtd}
\end{align}
Now, subtracting (\ref{3.1dtc})--(\ref{3.1dtd}) from the system (\ref{3.6a})--(\ref{3.6b})
and taking into consideration that $\mu_0$ is constant,
we obtain:
\begin{align}
(\eps(\EE_h)\partial_t\EE_h-\eps(\EE)\partial_t\EE,\PPsi_h)
-(\nabla\times(\HH_h-\PPi_h\HH),\PPsi_h)
&=0
&\forall\,\PPsi_h\in\WW_h, \label{3.6ea}\\
\mu_0(\partial_t(\HH_h-\PPi_h\HH),\PPhi_h)
+(\EE_h-\PP_h\EE,\nabla\times\PPhi_h) \nonumber\\
=\mu_0(\partial_t\HH-\PPi_h\partial_t\HH,\PPhi_h)
&&\forall\,\PPhi_h\in\UU_h. \label{3.6eb}
\end{align}
Now we will deal with the first term of (\ref{3.6ea}),
where we have in mind the choice $\PPsi_h=\EE_h-\PP_h\EE$ in what follows:
\begin{align*}
&\eps_0^{-1}\left[\eps(\EE_h)\partial_t\EE_h-\eps(\EE)\partial_t\EE\right]\\
&=\left(\eps_s(\EE_h)\II + \epsbm_m(\EE_h)\right)\partial_t\EE_h
-\left(\eps_s(\EE)\II + \epsbm_m(\EE)\right)\partial_t\EE)\\
&=\left((1+\chi^{(1)}+\chi^{(3)}|\EE_h|^2)\II + \epsbm_m(\EE_h)\right)\partial_t\EE_h
-\left((1+\chi^{(1)}+\chi^{(3)}|\EE|^2)\II + \epsbm_m(\EE)\right)\partial_t\EE\\
&=(1+\chi^{(1)})\partial_t\EE_h-(1+\chi^{(1)})\partial_t\EE\\
&\quad +\left(\chi^{(3)}|\EE_h|^2\II + \epsbm_m(\EE_h)\right)\partial_t\EE_h
-\left(\chi^{(3)}|\EE|^2\II + \epsbm_m(\EE)\right)\partial_t\EE\\
&=(1+\chi^{(1)})\partial_t(\EE_h-\EE)
+\chi^{(3)}\left[|\EE_h|^2\partial_t\EE_h-|\EE|^2\partial_t\EE\right]\\
&\quad +2\chi^{(3)}\left[\EE_h\EE_h^\top\partial_t\EE_h-\EE\EE^\top\partial_t\EE\right]
=:\delta_1+\delta_2+\delta_3.
\end{align*}
The treatment of $\delta_1$ is quite obvious. With $\EE_h-\EE=\PPsi_h+\PP_h\EE-\EE$
we get
\begin{align*}
\delta_1=(1+\chi^{(1)})\partial_t\PPsi_h+(1+\chi^{(1)})\partial_t(\PP_h\EE-\EE)
=:\delta_{11}+\delta_{12}.
\end{align*}
The term $\delta_2$ is decomposed as follows:
\begin{align*}
\delta_2
&=\chi^{(3)}\left[|\EE_h|^2\partial_t\EE_h-|\EE|^2\partial_t\EE\right]\\
&=\chi^{(3)}\left[|\EE_h|^2-|\EE|^2\right]\partial_t\EE_h
+\chi^{(3)}|\EE|^2\partial_t(\EE_h-\EE)\\
&=\chi^{(3)}(\EE_h+\EE)^\top(\EE_h-\EE)\partial_t\EE_h
+\chi^{(3)}|\EE|^2\partial_t\PPsi_h
+\chi^{(3)}|\EE|^2\partial_t(\PP_h\EE-\EE)\\
&=:\delta_{21}+\delta_{22}+\delta_{23}.
\end{align*}
For $\delta_3$, we use the following decomposition:
\begin{align*}
\delta_3
&=2\chi^{(3)}\left[\EE_h\EE_h^\top\partial_t\EE_h-\EE\EE^\top\partial_t\EE\right]\\
&=2\chi^{(3)}\left[\EE_h\EE_h^\top-\EE\EE^\top\right]\partial_t\EE_h
+2\chi^{(3)}\EE\EE^\top\partial_t(\EE_h-\EE)\\
&=2\chi^{(3)}(\EE_h-\EE)\EE_h^\top\partial_t\EE_h
+2\chi^{(3)}\EE(\EE_h-\EE)^\top\partial_t\EE_h\\
&\quad +2\chi^{(3)}\EE\EE^\top\partial_t\PPsi_h
+2\chi^{(3)}\EE\EE^\top\partial_t(\PP_h\EE-\EE)\\
&=:\delta_{31}+\delta_{32}+\delta_{33}+\delta_{34}.
\end{align*}
With these decompositions, equation (\ref{3.6ea}) takes the form
\begin{align*}
&(\eps(\EE_h)\partial_t\EE_h-\eps(\EE)\partial_t\EE,\PPsi_h)
-(\nabla\times(\HH_h-\PPi_h\HH),\PPsi_h)\\
&=\eps_0\int_\Omega\left[
\delta_{11}+\delta_{22}+\delta_{33}\right]^\top\PPsi_hd\xx
+\eps_0\int_\Omega\left[\delta_{12}+\delta_{21}+\delta_{23}
+\delta_{31}+\delta_{32}+\delta_{34}\right]^\top\PPsi_hd\xx\\
&\quad -(\nabla\times(\HH_h-\PPi_h\HH),\PPsi_h)=0,
\end{align*}
or, after some rearrangement,
\begin{align}
&\eps_0\int_\Omega\left[\delta_{11}+\delta_{22}+\delta_{33}\right]^\top\PPsi_hd\xx
-(\nabla\times(\HH_h-\PPi_h\HH),\PPsi_h)\nonumber\\
&=-\eps_0\int_\Omega\left[\delta_{12}+\delta_{21}+\delta_{23}
+\delta_{31}+\delta_{32}+\delta_{34}\right]^\top\PPsi_hd\xx.
\label{eq:3.6eb}
\end{align}
Then:
\begin{align*}
&\eps_0\int_\Omega\left[\delta_{11}+\delta_{22}+\delta_{33}\right]^\top\PPsi_hd\xx\\
&=\eps_0\int_\Omega\left[(1+\chi^{(1)})\partial_t\PPsi_h^\top\PPsi_h
+\chi^{(3)}|\EE|^2\partial_t\PPsi_h^\top\PPsi_h
+2\chi^{(3)}\left(\EE\EE^\top\partial_t\PPsi_h\right)^\top\PPsi_h
\right]d\xx\\
&=\frac{\eps_0}{2}\int_\Omega\left[(1+\chi^{(1)})\partial_t|\PPsi_h|^2
+\chi^{(3)}|\EE|^2\partial_t|\PPsi_h|^2
+4\chi^{(3)}\EE^\top\partial_t\PPsi_h\EE^\top\PPsi_h
\right]d\xx\,.
\end{align*}
Since
\[
|\EE|^2\partial_t|\PPsi_h|^2
=\partial_t(|\EE|^2|\PPsi_h|^2)-\partial_t(|\EE|^2)|\PPsi_h|^2
\quad\text{and}\quad
\EE^\top\partial_t\PPsi_h
=\partial_t(\EE^\top\PPsi_h)-\partial_t\EE^\top\PPsi_h,
\]
it follows that
\begin{align*}
&\eps_0\int_\Omega\left[\delta_{11}+\delta_{22}+\delta_{33}\right]^\top\PPsi_hd\xx\\
&=\frac{\eps_0}{2}\int_\Omega\left[(1+\chi^{(1)})\partial_t|\PPsi_h|^2
+\chi^{(3)}\partial_t(|\EE|^2|\PPsi_h|^2)
-\chi^{(3)}\partial_t(|\EE|^2)|\PPsi_h|^2\right.\\
&\quad +\left. 4\chi^{(3)}\partial_t(\EE^\top\PPsi_h)\EE^\top\PPsi_h
-4\chi^{(3)}\partial_t\EE^\top\PPsi_h\EE^\top\PPsi_h\right]d\xx\\
&=\frac{\eps_0}{2}\int_\Omega\left[(1+\chi^{(1)})\partial_t|\PPsi_h|^2
+\chi^{(3)}\partial_t(|\EE|^2|\PPsi_h|^2)
-\chi^{(3)}\partial_t(|\EE|^2)|\PPsi_h|^2\right.\\
&\quad +\left. 2\chi^{(3)}\partial_t|\EE^\top\PPsi_h|^2
-4\chi^{(3)}\partial_t\EE^\top\PPsi_h\EE^\top\PPsi_h\right]d\xx\\
&=\frac{\eps_0}{2}\int_\Omega\left[(1+\chi^{(1)})\partial_t|\PPsi_h|^2
+\chi^{(3)}\partial_t(|\EE|^2|\PPsi_h|^2)
+2\chi^{(3)}\partial_t|\EE^\top\PPsi_h|^2\right]d\xx\\
&\quad -\frac{\eps_0}{2}\int_\Omega
\chi^{(3)}\partial_t(|\EE|^2)|\PPsi_h|^2d\xx
-2\eps_0\int_\Omega
\chi^{(3)}\partial_t\EE^\top\PPsi_h\EE^\top\PPsi_hd\xx\,.
\end{align*}
From the estimates
\begin{align*}
&\left|\frac{\eps_0}{2}\int_\Omega
\chi^{(3)}\partial_t(|\EE|^2)|\PPsi_h|^2d\xx\right|
=\left|\eps_0\int_\Omega
\chi^{(3)}\partial_t\EE^\top\EE|\PPsi_h|^2d\xx\right|\\
&\le \|\chi^{(3)}\|_{L_\infty(\Omega)}\|\partial_t\EE\|_{\CC(0,T;\LL_\infty(\Omega))}
\|\EE\|_{\CC(0,T;\LL_\infty(\Omega))}\|\PPsi_h\|_{\eps_0}^2\\
&\le \|\chi^{(3)}\|_{L_\infty(\Omega)}
\|\EE\|_{\CC^1(0,T;\LL_\infty(\Omega))}^2\|\PPsi_h\|_{\eps_0}^2
\end{align*}
and, analogously,
\[
\left|\eps_0\int_\Omega
\chi^{(3)}\partial_t\EE^\top\PPsi_h\EE^\top\PPsi_hd\xx\right|
\le \|\chi^{(3)}\|_{L_\infty(\Omega)}
\|\EE\|_{\CC^1(0,T;\LL_\infty(\Omega))}^2\|\PPsi_h\|_{\eps_0}^2
\]
we conclude that
\begin{align}
&\eps_0\int_\Omega\left[\delta_{11}+\delta_{22}+\delta_{33}\right]^\top\PPsi_hd\xx\nonumber\\
&\ge\frac{\eps_0}{2}\int_\Omega\left[(1+\chi^{(1)})\partial_t|\PPsi_h|^2
+\chi^{(3)}\partial_t(|\EE|^2|\PPsi_h|^2)
+2\chi^{(3)}\partial_t|\EE^\top\PPsi_h|^2\right]d\xx\nonumber\\
&\quad -3\|\chi^{(3)}\|_{L_\infty(\Omega)}
\|\EE\|_{\CC^1(0,T;\LL_\infty(\Omega))}^2\|\PPsi_h\|_{\eps_0}^2\nonumber\\
&=\frac{1}{2}\partial_t\|\PPsi_h\|_{\eps_0(1+\chi^{(1)})}^2
+\frac{\eps_0}{2}\partial_t\int_\Omega\chi^{(3)}\left[|\EE|^2|\PPsi_h|^2
+2|\EE^\top\PPsi_h|^2\right]d\xx\nonumber\\
&\quad -3\|\chi^{(3)}\|_{L_\infty(\Omega)}
\|\EE\|_{\CC^1(0,T;\LL_\infty(\Omega))}^2\|\PPsi_h\|_{\eps_0}^2.
\label{eq:estlhs}
\end{align}
For the right-hand side, we have:
\begin{align}
&-\eps_0\int_\Omega\left[\delta_{12}+\delta_{21}+\delta_{23}
+\delta_{31}+\delta_{32}+\delta_{34}\right]^\top\PPsi_hd\xx\nonumber\\
&=-\eps_0\int_\Omega\left[(1+\chi^{(1)})\partial_t(\PP_h\EE-\EE)^\top\PPsi_h\right.\nonumber\\
&\quad +\chi^{(3)}(\EE_h+\EE)^\top(\EE_h-\EE)\partial_t\EE_h^\top\PPsi_h
+\chi^{(3)}|\EE|^2\partial_t(\PP_h\EE-\EE)^\top\PPsi_h\nonumber\\
&\quad +2\chi^{(3)}\left((\EE_h-\EE)\EE_h^\top\partial_t\EE_h\right)^\top\PPsi_h
+2\chi^{(3)}\left(\EE(\EE_h-\EE)^\top\partial_t\EE_h\right)^\top\PPsi_h\nonumber\\
&\quad +\left.
2\chi^{(3)}\left(\EE\EE^\top\partial_t(\PP_h\EE-\EE)\right)^\top\PPsi_h
\right]d\xx\displaybreak[0]\nonumber\\
&\le\eps_0\int_\Omega\left[(1+\chi^{(1)})|\partial_t(\PP_h\EE-\EE)||\PPsi_h|\right.\nonumber\\
&\quad +\chi^{(3)}|\EE_h+\EE||\EE_h-\EE||\partial_t\EE_h||\PPsi_h|
+\chi^{(3)}|\EE|^2|\partial_t(\PP_h\EE-\EE)||\PPsi_h|\nonumber\\
&\quad +2\chi^{(3)}\EE_h^\top\partial_t\EE_h(\EE_h-\EE)^\top\PPsi_h
+2\chi^{(3)}(\EE_h-\EE)^\top\partial_t\EE_h\EE^\top\PPsi_h\nonumber\\
&\quad +\left.
2\chi^{(3)}\EE^\top\partial_t(\PP_h\EE-\EE)\EE^\top\PPsi_h
\right]d\xx\displaybreak[0]\nonumber\\
&\le\eps_0\int_\Omega\left[(1+\chi^{(1)}+\chi^{(3)}|\EE|^2)|\partial_t(\PP_h\EE-\EE)|
|\PPsi_h|\right.\nonumber\\
&\quad +\chi^{(3)}|\EE_h+\EE||\partial_t\EE_h||\PPsi_h|^2
+\chi^{(3)}|\EE_h+\EE||\PP_h\EE-\EE||\partial_t\EE_h||\PPsi_h|\nonumber\\
&\quad +2\chi^{(3)}\EE_h^\top\partial_t\EE_h|\PPsi_h|^2
+2\chi^{(3)}\EE_h^\top\partial_t\EE_h(\PP_h\EE-\EE)^\top\PPsi_h\nonumber\\
&\quad +2\chi^{(3)}\PPsi_h^\top\partial_t\EE_h\EE^\top\PPsi_h
+2\chi^{(3)}(\PP_h\EE-\EE)^\top\partial_t\EE_h\EE^\top\PPsi_h\nonumber\\
&\quad +\left.
2\chi^{(3)}\EE^\top\partial_t(\PP_h\EE-\EE)\EE^\top\PPsi_h
\right]d\xx\displaybreak[0]\nonumber\\
&\le\eps_0\int_\Omega\left[(1+\chi^{(1)}+\chi^{(3)}|\EE|^2)|\partial_t(\PP_h\EE-\EE)|
|\PPsi_h|\right.\nonumber\\
&\quad +\chi^{(3)}|\EE_h||\partial_t\EE_h||\PPsi_h|^2
+\chi^{(3)}|\EE||\partial_t\EE_h||\PPsi_h|^2\nonumber\\
&\quad +\chi^{(3)}|\EE_h||\PP_h\EE-\EE||\partial_t\EE_h||\PPsi_h|
+\chi^{(3)}|\EE||\PP_h\EE-\EE||\partial_t\EE_h||\PPsi_h|\nonumber\\
&\quad +2\chi^{(3)}|\EE_h||\partial_t\EE_h||\PPsi_h|^2
+2\chi^{(3)}|\EE_h||\partial_t\EE_h||\PP_h\EE-\EE||\PPsi_h|\nonumber\\
&\quad +2\chi^{(3)}|\EE||\partial_t\EE_h||\PPsi_h|^2
+2\chi^{(3)}|\EE||\partial_t\EE_h||\PP_h\EE-\EE||\PPsi_h|\nonumber\\
&\quad +\left.
2\chi^{(3)}|\EE|^2|\partial_t(\PP_h\EE-\EE)||\PPsi_h|
\right]d\xx\displaybreak[0]\nonumber\\
&=\eps_0\int_\Omega\left[
(1+\chi^{(1)}+3\chi^{(3)}|\EE|^2)|\partial_t(\PP_h\EE-\EE)|
|\PPsi_h|\right.\nonumber\\
&\quad +3\chi^{(3)}|\EE_h||\partial_t\EE_h||\PP_h\EE-\EE||\PPsi_h|
+3\chi^{(3)}|\EE||\partial_t\EE_h||\PP_h\EE-\EE||\PPsi_h|\nonumber\\
&\quad +\left.3\chi^{(3)}|\EE_h||\partial_t\EE_h||\PPsi_h|^2
+3\chi^{(3)}|\EE||\partial_t\EE_h||\PPsi_h|^2
\right]d\xx\displaybreak[0]\nonumber\\
&=\eps_0\int_\Omega\left[
(1+\chi^{(1)}+3\chi^{(3)}|\EE|^2)|\partial_t(\PP_h\EE-\EE)|
|\PPsi_h|\right.\nonumber\\
&\quad +3\chi^{(3)}(|\EE_h|+|\EE|)|\partial_t\EE_h||\PP_h\EE-\EE||\PPsi_h|
+\left.3\chi^{(3)}(|\EE_h|+|\EE|)|\partial_t\EE_h||\PPsi_h|^2
\right]d\xx\nonumber\\
&\le \left[\|1+\chi^{(1)}\|_{L_\infty(\Omega)}
+3\|\chi^{(3)}\|_{L_\infty(\Omega)}\|\EE\|_{\CC(0,T;\LL_\infty(\Omega))}^2
\right]\|\partial_t(\PP_h\EE-\EE)\|_{\eps_0}\|\PPsi_h\|_{\eps_0}\nonumber\\
&\quad +3\|\chi^{(3)}\|_{L_\infty(\Omega)}
\left[\|\EE_h\|_{\CC(0,T;\LL_\infty(\Omega))}+\|\EE\|_{\CC(0,T;\LL_\infty(\Omega))}\right]
\|\partial_t\EE_h\|_{\CC(0,T;\LL_\infty(\Omega))}
\|\PP_h\EE-\EE\|_{\eps_0}\|\PPsi_h\|_{\eps_0}\nonumber\\
&\quad +3\|\chi^{(3)}\|_{L_\infty(\Omega)}
\left[\|\EE_h\|_{\CC(0,T;\LL_\infty(\Omega))}+\|\EE\|_{\CC(0,T;\LL_\infty(\Omega))}\right]
\|\partial_t\EE_h\|_{\CC(0,T;\LL_\infty(\Omega))}
\|\PPsi_h\|_{\eps_0}^2\nonumber\\
&=: C_1\|\partial_t(\PP_h\EE-\EE)\|_{\eps_0}\|\PPsi_h\|_{\eps_0}
+C_2\|\PP_h\EE-\EE\|_{\eps_0}\|\PPsi_h\|_{\eps_0}
+C_3\|\PPsi_h\|_{\eps_0}^2,
\label{eq:estrhs}
\end{align}
where the positive constants $C_1$, $C_2$, $C_3$ depend on certain norms
of $\chi^{(1)}$, $\chi^{(3)}$, $\EE$, and $\EE_h$.
Combining the estimates (\ref{eq:estlhs}) and (\ref{eq:estrhs}) with (\ref{eq:3.6eb}), we get
\begin{align*}
&\frac{1}{2}\partial_t\|\PPsi_h\|_{\eps_0(1+\chi^{(1)})}^2
+\frac{\eps_0}{2}\partial_t\int_\Omega\chi^{(3)}\left[|\EE|^2|\PPsi_h|^2
+2|\EE^\top\PPsi_h|^2\right]d\xx\\
&\quad -3\|\chi^{(3)}\|_{L_\infty(\Omega)}
\|\EE\|_{\CC^1(0,T;\LL_\infty(\Omega))}^2\|\PPsi_h\|_{\eps_0}^2
-(\nabla\times(\HH_h-\PPi_h\HH),\PPsi_h)\\
&\le\eps_0\int_\Omega\left[\delta_{11}+\delta_{22}+\delta_{33}\right]^\top\PPsi_hd\xx
-(\nabla\times(\HH_h-\PPi_h\HH),\PPsi_h)\\
&=-\eps_0\int_\Omega\left[\delta_{12}+\delta_{21}+\delta_{23}
+\delta_{31}+\delta_{32}+\delta_{34}\right]^\top\PPsi_hd\xx\\
&\le C_1\|\partial_t(\PP_h\EE-\EE)\|_{\eps_0}\|\PPsi_h\|_{\eps_0}
+C_2\|\PP_h\EE-\EE\|_{\eps_0}\|\PPsi_h\|_{\eps_0}
+C_3\|\PPsi_h\|_{\eps_0}^2.
\end{align*}
This finally leads to
\begin{align*}
&\frac{1}{2}\partial_t\|\PPsi_h\|_{\eps_0(1+\chi^{(1)})}^2
+\frac{\eps_0}{2}\partial_t\int_\Omega\chi^{(3)}\left[|\EE|^2|\PPsi_h|^2
+2|\EE^\top\PPsi_h|^2\right]d\xx\\
&\quad -(\nabla\times(\HH_h-\PPi_h\HH),\PPsi_h)\\
&\le \left[C_1\|\partial_t(\PP_h\EE-\EE)\|_{\eps_0}
+C_2\|\PP_h\EE-\EE\|_{\eps_0}\right]\|\PPsi_h\|_{\eps_0}
+C_4\|\PPsi_h\|_{\eps_0}^2,
\end{align*}
where
\[
C_4:=C_3+3\|\chi^{(3)}\|_{L_\infty(\Omega)}\|\EE\|_{\CC^1(0,T;\LL_\infty(\Omega))}^2.
\]
Now we consider (\ref{3.6eb}) with
$\PPhi_h=\HH_h-\PPi_h\HH$ and get
\begin{align*}
\frac{1}{2}\partial_t\|\PPhi_h\|_{\mu_0}^2
+(\EE_h-\PP_h\EE,\nabla\times\PPhi_h)
&=\mu_0(\partial_t\HH-\PPi_h\partial_t\HH,\PPhi_h)\\
&\le \|\partial_t\HH-\PPi_h\partial_t\HH\|_{\mu_0}\|\PPhi_h\|_{\mu_0}.
\end{align*}
Adding both inequalities and making use of the commutation property
of $\PP_h$, we arrive at
\begin{align*}
&\frac{1}{2}\partial_t\|\PPsi_h\|_{\eps_0(1+\chi^{(1)})}^2
+\frac{1}{2}\partial_t\|\PPhi_h\|_{\mu_0}^2
+\frac{\eps_0}{2}\partial_t\int_\Omega\chi^{(3)}\left[|\EE|^2|\PPsi_h|^2
+2|\EE^\top\PPsi_h|^2\right]d\xx\\
&\le \left[C_1\|\partial_t\EE-\PP_h\partial_t\EE\|_{\eps_0}
+C_2\|\EE-\PP_h\EE\|_{\eps_0}\right]\|\PPsi_h\|_{\eps_0}
+\|\partial_t\HH-\PPi_h\partial_t\HH\|_{\mu_0}\|\PPhi_h\|_{\mu_0}\\
&\quad +C_4\|\PPsi_h\|_{\eps_0}^2.
\end{align*}
The projection errors can be estimated by means of (\ref{eq:errL2projection})
and (\ref{eq:errorpi1}), that is, for
$\EE,\partial_t\EE\in\HH^k(\Omega)$ and $\partial_t\HH\in\HH^{k+1}(\Omega)$,
we have that
\begin{align*}
\|\EE-\PP_h\EE\|_{\eps_0}
&\le C\sqrt{\eps_0}\,h^k\|\EE\|_{\HH^k(\Omega)}
\le C\sqrt{\eps_0}\,h^k\|\EE\|_{\CC(0,T;\HH^k(\Omega))},\\
\|\partial_t\EE-\PP_h\partial_t\EE\|_{\eps_0}
&\le C\sqrt{\eps_0}\,h^k\|\partial_t\EE\|_{\HH^k(\Omega)}
\le C\sqrt{\eps_0}\,h^k\|\partial_t\EE\|_{\CC(0,T;\HH^k(\Omega))},\\
\|\partial_t\HH-\PPi_h\partial_t\HH\|_{\mu_0}
&\le C\sqrt{\mu_0}\,h^k\|\partial_t\HH\|_{\HH^{k+1}(\Omega)}
\le C\sqrt{\mu_0}\,h^k\|\partial_t\HH\|_{\CC(0,T;\HH^{k+1}(\Omega))}.
\end{align*}
In this way the above estimate can be written as
\begin{align*}
&\frac{1}{2}\partial_t\|\PPsi_h\|_{\eps_0(1+\chi^{(1)})}^2
+\frac{1}{2}\partial_t\|\PPhi_h\|_{\mu_0}^2
+\frac{\eps_0}{2}\partial_t\int_\Omega\chi^{(3)}\left[|\EE|^2|\PPsi_h|^2
+2|\EE^\top\PPsi_h|^2\right]d\xx\\
&\le C_5h^k\left[\|\PPsi_h\|_{\eps_0}+\|\PPhi_h\|_{\mu_0}\right]
+C_4\|\PPsi_h\|_{\eps_0}^2.
\end{align*}
Setting
\[
w_h(t):=\sqrt{\|\PPsi_h(t)\|_{\eps_0}^2+\|\PPhi_h(t)\|_{\mu_0}^2},
\]
we get
\begin{align*}
&\frac{1}{2}\partial_t\|\PPsi_h\|_{\eps_0(1+\chi^{(1)})}^2
+\frac{1}{2}\partial_t\|\PPhi_h\|_{\mu_0}^2
+\frac{\eps_0}{2}\partial_t\int_\Omega\chi^{(3)}\left[|\EE|^2|\PPsi_h|^2
+2|\EE^\top\PPsi_h|^2\right]d\xx\\
&\le C_5\sqrt{2}h^kw_h(t)
+C_4\|\PPsi_h\|_{\eps_0}^2\\
&\le C_5\sqrt{2}h^kw_h(t)
+C_4w_h^2(t).
\end{align*}
Integrating this inequality, we obtain
\begin{align}
&\frac{1}{2}\|\PPsi_h(t)\|_{\eps_0(1+\chi^{(1)})}^2
+\frac{1}{2}\|\PPhi_h(t)\|_{\mu_0}^2 \nonumber\\
&\quad +\frac{\eps_0}{2}\int_\Omega\chi^{(3)}\left[|\EE(t)|^2|\PPsi_h(t)|^2
+2|\EE(t)^\top\PPsi_h(t)|^2\right]d\xx \nonumber\\
&\le\frac{1}{2}\|\PPsi_h(0)\|_{\eps_0(1+\chi^{(1)})}^2
+\frac{1}{2}\|\PPhi_h(0)\|_{\mu_0}^2 \nonumber\\
&\quad +\frac{\eps_0}{2}\int_\Omega\chi^{(3)}\left[|\EE(0)|^2|\PPsi_h(0)|^2
+2|\EE(0)^\top\PPsi_h(0)|^2\right]d\xx \nonumber\\
&\quad +\int_0^t\left[C_5\sqrt{2}h^k w_h(s)+C_4w_h^2(s)\right]ds.
\label{eq:gd}
\end{align}
By the monotonicity of the weighted norms w.r.t.\ the weight and
the nonnegativity of the integral term on the left-hand side, we see that
\begin{align}
\frac12 w_h^2(t)&\le \frac{1}{2}\|\PPsi_h(t)\|_{\eps_0(1+\chi^{(1)})}^2
+\frac{1}{2}\|\PPhi_h(t)\|_{\mu_0}^2 \nonumber\\
&\quad +\frac{\eps_0}{2}\int_\Omega\chi^{(3)}\left[|\EE(t)|^2|\PPsi_h(t)|^2
+2|\EE(t)^\top\PPsi_h(t)|^2\right]d\xx.
\label{eq:lhsgd}
\end{align}
On the other hand, we have the estimates
\begin{align}
\label{eq:rhsgd1}
\|\PPsi_h(0)\|_{\eps_0(1+\chi^{(1)})}^2
\le \|1+\chi^{(1)}\|_{L_\infty(\Omega)}\|\PPsi_h(0)\|_{\eps_0}^2
\le \|1+\chi^{(1)}\|_{L_\infty(\Omega)} w_h^2(0)
\end{align}
and
\begin{align}
&\eps_0\int_\Omega\chi^{(3)}\left[|\EE(0)|^2|\PPsi_h(0)|^2
+2|\EE(0)^\top\PPsi_h(0)|^2\right]d\xx \nonumber\\
&\le 3\|\chi^{(3)}\|_{L_\infty(\Omega)}\|\EE(0)\|_{L_\infty(\Omega)}^2
\|\PPsi_h(0)\|_{\eps_0}^2 \nonumber\\
&\le 3\|\chi^{(3)}\|_{L_\infty(\Omega)}\|\EE(0)\|_{L_\infty(\Omega)}^2
w_h^2(0).
\label{eq:rhsgd2}
\end{align}
Combining (\ref{eq:lhsgd}), (\ref{eq:rhsgd1}), (\ref{eq:rhsgd2})
with (\ref{eq:gd}), we get
\begin{align*}
\frac12 w_h^2(t)
&\le \frac12 \|1+\chi^{(1)}\|_{L_\infty(\Omega)} w_h^2(0)
+\frac32\|\chi^{(3)}\|_{L_\infty(\Omega)}\|\EE(0)\|_{L_\infty(\Omega)}^2
w_h^2(0)\\
&\quad +\int_0^t\left[C_5\sqrt{2}h^k w_h(s)+C_4w_h^2(s)\right]ds,
\end{align*}
or, equivalently,
\begin{align}\label{eq:gd2}
w_h^2(t) \le C_6^2 w_h^2(0) + \int_0^t\left[2C_5\sqrt{2}h^k w_h(s)+2C_4w_h^2(s)\right]ds,
\end{align}
where $C_6^2:=\|1+\chi^{(1)}\|_{L_\infty(\Omega)}
+3\|\chi^{(3)}\|_{L_\infty(\Omega)}\|\EE(0)\|_{L_\infty(\Omega)}^2$.

In the paper \cite{Dafermos:79}, a Gronwall-type lemma (Lemma 4.1) is specified which gives
a bound on the value $w(T)$ provided an inequality like (\ref{eq:gd2}) is satisfied:
\begin{align*}
w_h(T)\le C_6 e^{C_4 T}w_h(0)+ C_5\sqrt{2}h^kTe^{C_4 T}.
\end{align*}
From this and the triangle inequality in conjunction with
(\ref{eq:errL2projection}) and (\ref{eq:errorpi1})
the statement follows.
Indeed, since
\[
w(t)\le\|\PPsi_h(t)\|_{\eps_0}+\|\PPhi_h(t)\|_{\mu_0}\le\sqrt{2}w(t)
\quad\text{for all }t\in[0,T],
\]
we get
\begin{align*}
\|\PPsi_h(T)\|_{\eps_0}+\|\PPhi_h(T)\|_{\mu_0}
&\le\sqrt{2}\left[
C_6 w_h(0)+ C_5\sqrt{2}h^kT
\right]e^{C_4 T}\\
&\le\sqrt{2}\left[
C_6 \|\PPsi_h(0)\|_{\eps_0}+ C_6\|\PPhi_h(0)\|_{\mu_0}+ C_5\sqrt{2}h^kT
\right]e^{C_4 T}.
\end{align*}
Then
\begin{align*}
&\|\EE_h(T)-\EE(T)\|_{\eps_{0}}+\|\HH_h(T)-\HH(T)\|_{\mu_{0}}\\
&\le \|\PPsi_h(T)\|_{\eps_0}+\|\PPhi_h(T)\|_{\mu_0}
+\|\PP_h\EE(T)-\EE(T)\|_{\eps_{0}}+\|\PPi_h\HH(T)-\HH(T)\|_{\mu_{0}}\\
&\le\sqrt{2}\left[
C_6 \|\PPsi_h(0)\|_{\eps_0}+ C_6\|\PPhi_h(0)\|_{\mu_0}+ C_5\sqrt{2}h^kT
\right]e^{C_4 T}\\
&\quad + Ch^k\left[\sqrt{\eps_0}\|\EE\|_{\CC(0,T;\HH^k(\Omega))}
+ \sqrt{\mu_0}\|\HH\|_{\CC(0,T;\HH^{k+1}(\Omega))}\right],
\end{align*}
which implies the stated estimate.
\end{proof}
\begin{remark}
Note that the constant $C$ in this estimate behaves as $Te^{C_4 T}$
for large $T$.
\end{remark}

\section{Conclusion}
In this paper we have investigated a semi-discrete conforming finite element approximation
to the solution of Maxwell's equations for nonlinear media of Kerr-type
using N\'ed\'elec elements from the first family.
We have demonstrated energy (stability) estimates for the weakly formulated
problem and error estimates for the semi-discretized problem.
The results can be extended to other conforming finite element methods
provided the corresponding projection operators $\PP_h$ and $\PPi_h$
((\ref{eq:L2projection}), (\ref{eq:pi1a})--(\ref{eq:pi1b}))
admit analogous properties.

\bibliographystyle{alpha}

\begin{thebibliography}{PNTB09}

\bibitem[AF03]{Adams:03}
R.A. Adams and J.J.F. Fournier.
\newblock {\em Sobolev spaces}, volume 140 of {\em Pure and Applied Mathematics
  (Amsterdam)}.
\newblock Elsevier/Academic Press, Amsterdam, 2nd edition, 2003.

\bibitem[AY18]{Angermann:18d}
L.~Angermann and V.V. Yatsyk.
\newblock {\em Resonant Scattering and Generation of Waves. Cubically
  Polarizable Layers}.
\newblock Springer-Verlag, Cham, 2018.

\bibitem[Boy03]{Boyd:03}
R.W. Boyd.
\newblock {\em Nonlinear Optics}.
\newblock Academic Press, San Diego-London, 2003.

\bibitem[Cia02]{Ciarlet:02b}
P.G. Ciarlet.
\newblock {\em The finite element method for elliptic problems}, volume~40 of
  {\em Classics in Applied Mathematics}.
\newblock Society for Industrial and Applied Mathematics (SIAM), Philadelphia,
  PA, 2002.
\newblock Reprint of the 1978 original.

\bibitem[Daf79]{Dafermos:79}
C.M. Dafermos.
\newblock The second law of thermodynamics and stability.
\newblock {\em Arch. Rational Mech. Anal.}, 70:167--179, 1979.

\bibitem[DL76]{Duvaut:76}
G.~Duvaut and J.L. Lions.
\newblock {\em Inequalities in mechanics and physics}.
\newblock Springer-Verlag, Berlin, 1976.
\newblock Translation of the French Original Edition 1972.

\bibitem[GR86]{Girault:86}
V.~Girault and P.A. Raviart.
\newblock {\em Finite Element Methods for Navier-Stokes Equations}.
\newblock Springer-Verlag, Berlin-Heidelberg-New York, 1986.

\bibitem[GvL96]{Golub:96b}
G.~Golub and C.F. van Loan.
\newblock {\em Matrix computations}.
\newblock Johns Hopkins University Press, Baltimore, 1996.
\newblock 3rd ed.

\bibitem[KJF77]{Kufner:77}
A.~Kufner, O.~John, and S.~Fu\v{c}ik.
\newblock {\em Function spaces}.
\newblock Academia, Prague, 1977.

\bibitem[MM95]{Makridakis:95}
C.G. Makridakis and P.~Monk.
\newblock Time-discrete finite element schemes for {M}axwell's equations.
\newblock {\em RAIRO Mod{\'e}l. Math. Anal. Num{\'e}r.}, 29(2):171--197, 1995.

\bibitem[Mon91]{Monk:91}
P.~Monk.
\newblock A mixed method for approximating {M}axwell's equations.
\newblock {\em SIAM J. Numer. Anal.}, 28(6):1610--1634, 1991.

\bibitem[Mon03]{Monk:03}
P.~Monk.
\newblock {\em Finite Element Methods for Maxwell's Equations}.
\newblock Oxford University Press, Oxford, 2003.

\bibitem[N{\'{e}}d80]{Nedelec:80}
J.C. N{\'{e}}d{\'{e}}lec.
\newblock Mixed finite elements in {$\R^3$}.
\newblock {\em Numer. Math.}, 35(3):315--341, 1980.

\bibitem[PNTB09]{Pototschnig:09}
M.~Pototschnig, J.~Niegemann, L.~Tkeshelashvili, and K.~Busch.
\newblock Time-domain simulations of the nonlinear {M}axwell equations using
  operator-exponential methods.
\newblock {\em IEEE Trans. Antennas Propag.}, 57(2):475--483, 2009.

\bibitem[RT77]{Raviart:77}
P.A. Raviart and J.M. Thomas.
\newblock A mixed finite element method for 2nd order elliptic problems.
\newblock In I.~Galligani and E.~Magenes, editors, {\em Mathematical Aspects of
  the Finite Element Method}, pages 292--315. Springer-Verlag, Berlin, 1977.
\newblock Lecture Notes in Mathematics, vol. 606.

\end{thebibliography}

\end{document}